\title[{\fontsize{7pt}{7pt}\selectfont Weingarten calculus for centered random permutation matrices}]
{Weingarten calculus for centered random permutation matrices}
\author {Beno\^\i{}t Collins}
\address{Department of Mathematics, Kyoto University} 
\email{collins@math.kyoto-u.ac.jp}
\author {Manasa Nagatsu}
\address{Department of Mathematics, Kyoto University} 
\email{nagatsu.manasa.64s@st.kyoto-u.ac.jp}
\numberwithin{equation}{section}
\theoremstyle{plain}
\newtheorem{lemma}{Lemma}[section]
\newtheorem{theorem}[lemma]{Theorem}
\newtheorem{proposition}[lemma]{Proposition}
\newtheorem{corollary}[lemma]{Corollary}
\theoremstyle{definition}
\theoremstyle{remark}
\newtheorem{remark}[lemma]{Remark}
\newtheorem{example}[lemma]{Example}
\DeclareMathOperator{\Wg}{Wg}
\renewcommand{\d}{\mathrm{d}}
\begin{document}

\begin{abstract}

We introduce and study the Weingarten calculus for centered random permutation matrices in the symmetric group $S_N$. 
After presenting a formulation of the Weingarten calculus on the symmetric group, we derive a formula in the centered case, as well as a sign-respecting formula.
Our investigations uncover the fact that a building block of this Weingarten calculus is Kummer's confluent hypergeometric function.
It allows us to derive multiple algebraic properties of the Weingarten function and uniform estimate. 
These results shed a conceptual light on phenomena that take place regarding the algebraic and asymptotic behavior of moments of random permutations in the resolution of Bordenave and Bordenave-Collins of strong convergence.
We obtain multiple new non-trivial estimates for moments of coefficients in centered moments.

\end{abstract}

\maketitle

\section{Introduction}

The behavior of random permutations has recently proven to be a key subject in mathematics when it comes to studying generic random graphs. For example, models based on random permutations have been proven by Bordenave to be almost Ramanujan 
\cite{B20_MR4203039}, 
\cite{F08_MR2437174}, and these results have been extended in 
\cite{BC19_MR4024563}
to the random covering of graphs.
In these cases, the proofs rely on moment computations, and it turns out that the correct object to look at is not the random permutation itself (viewed as a unitary $N\times N$ matrix whose entries are 0 or 1) but a centered permutation, acting on the orthogonal space of the Perron Frobenius vector $(1,\ldots, 1)$.

On the other hand, the first named author observed more than a decade ago -- jointly with Ken Dykema, in an unpublished note -- that there exists a Weingarten calculus for random permutations, and this was also described in 
\cite{BCS2012_MR2917777}.

Let us digress here into the Weingarten calculus.
Every compact group $G$ has a unique left- and right-invariant probability measure, known as the Haar measure $\mu_G$.
The Weingarten calculus is a powerful method to study polynomials in random variables in the context of compact matrix groups - more precisely this calculus enables to compute the expectation of a product of entries of Haar-distributed random matrices of the form:
\begin{equation}\label{eq:IntegralOverG}
\int_{g \in G} g_{i_{1} j_{1}} \cdots g_{i_{k} j_{k}} \d \mu_{G} (g).
\end{equation}
Weingarten calculus has wide applications, including random matrix theory, particularly in the context of Haar random unitary matrices, quantum information theory, such as calculations in quantum channels, and free probability theory, where it plays a significant role in studying asymptotic behavior in the large $N$ limit and establishing strong convergence, as discussed in, for example, \cite{CMN_MR4415894}, \cite{CM_MR3680193}, and the references therein.

The theory of Weingarten calculus for the unitary group $\mathcal{U}_N$ or the orthogonal group $\mathcal{O}_N$ is well known. 
However, when it comes to studying random permutations, given that a direct computation of moments turns out to be straightforward, and in that respect, up to now, Weingarten calculus was just an interesting algebraic curiosity. 

However, a critical point of Bordenave's paper \cite{F08_MR2437174}
 is the fact that the product of centered diagonal entries has an expectation that decays fast, typically to a speed comparable to independent variables, as long as one takes a polynomial number of diagonal entries (see the proofs of Proposition 11 and Lemma 12 of \cite{B20_MR4203039}). 
In this paper, we endeavor to study this phenomenon of fast decay more systematically and obtain the following results.

 (1) We present a new formulation for permutations (Theorem \ref{thm:non-CenteredWg}) and for
 centered random permutation matrices (Theorem \ref{thm:CenteredWg1})
 
 (2) A Collins-Matsumoto-Novak-type formula (\cite{CM_MR3680193}, \cite{MN_MR3010693}) 
 for the Weingarten coefficient (Theorem \ref{thm:CenteredWg2}).
 Specifically, the original Weingarten coefficients for the unitary or orthogonal Haar measures had generating functions in the dimension of the group that involved signed functions
(\cite{C2003_MR1959915}, 
\cite{CS06_MR2217291}), 
and later we managed to remove this. For quantum groups, the first named author did the same in a joint work with Brannan in \cite{BC_MR3874001}.
We uncover the fact that the building block of the centered Weingarten calculus for permutations is a quantity that we call $a_k(N)$, which is closely related to Kummer's confluent hypergeometric functions.

(3)
In particular, this gives algebraic results about the sign of Weingarten functions and shows that a quantity introduced by Bordenave (Section \ref{section-charles-element}) is conceptually the correct object to estimate the centered Weingarten function. 

(4)
Uniform estimates for the Weingarten functions of centered permutations. 
In particular, this gives new non-trivial estimates for moments of coefficients in centered moments. 
An initial motivation to this problem was described to the second author by Bordenave, in the hope of checking to which extent one can compare the moments of i.i.d.-centered Bernoulli and random permutations. 

This paper is organized as follows.
After this introduction, in Section \ref{sec:Non-CenteredWg}, we introduce notations on permutation matrices obtained from the symmetric group $S_N$, and formulate the Weingarten calculus for them.
In Section \ref{sec:CenteredWg}, we formulate the main results of the paper: the Weingarten calculus for centered random permutation matrices.
In Section \ref{sec:Asymptotics}, we illustrate the asymptotics of the Weingarten coefficients defined in Section \ref{sec:CenteredWg}.  
In the final sections, we discuss a recursive reduction to compute integrals such as \eqref{eq:IntegralOverG} for centered random permutation matrices in Section \ref{sec:reduction}, and we also specifically discuss specific examples of the failure for the uniform bound of the Weingarten coefficients in Section \ref{sec:FailureUniformBound}.
In Section \ref{sec:moment}, we discuss specific examples that describe how those techniques help the understanding of the moment integral.

\subsection*{Acknowledgements}
B. C. is supported by JSPS Grant-in-Aid Scientific Research (B) no. 21H00987, and Challenging Research (Exploratory) no.  23K17299.
Part of this work emerged from discussions with Ken Dykema around fifteen years ago. We thank him for the discussions at that time. 

M. N. was partially supported by Overseas Dispatch Expenses for Students from the Super Global Course 
and by the Mathematics and Mathematical Sciences Innovation Course, Kyoto University.
Part of this research project was conducted during her stay at ENS Lyon, and she would like to acknowledge the hospitality of ENS Lyon and, in particular, Prof. Alice Guionnet.
The research was inspired by discussions with Prof. Charles Bordenave and she would like to thank him and Chaire Jean Morlet at CIRM for their hospitality. 

Finally, both authors are indebted to Sho Matsumoto for a careful reading of a preliminary version of this manuscript and very useful feedback.

\section{Permutation Matrices}\label{sec:Non-CenteredWg}

\subsection{Notations}
Throughout this paper, we consider the normalized Haar integration over the symmetric group $S_N$. Here, the normalized Haar measure is the counting measure on $S_N$ renormalized by a factor $1/N!$. Any element $g \in S_N$ can be identified as $N\times N$ matrix called {\em permutation matrix} with exactly one $1$ in each row and column, that is, $(r,s)$-element of the matrix $g$ is $1$ if and only if $g(r)=s$ is a permutation.

For details of the following notations on set partitions, we refer to \cite{S_MR1442260}. If $Z$ is a finite set, a partition of $Z$ is a set $\pi=\left\{B_{1}, \ldots, B_{r}\right\}$ of pairwise disjoint non-empty subsets $B_{j}$ of $Z$ whose union is all of $Z$. These sets $B_{j}$ are called the blocks of the partition, and the number of blocks of $\pi$ is simply denoted $\# \pi$. 

The set of partitions of the set of $k$ integers $[k] :=\{1, \cdots, k\}$, denoted by $\mathcal{P}(k)$, forms a lattice with respect to the partial order, the meet operation $\wedge$ and the join operation $\vee$ are defined as follows.  
\begin{itemize}
    \item partial order $\leq $: \ For $\pi_1, \pi_2 \in \mathcal{P}(k)$, we call $\pi_1 \leq \pi_2 $ if and only if each block of $\pi_1$ is contained in some block of $\pi_2$. The maximum partition is denoted by $1_k = \{\{1,\cdots , k\}\}$, and the minimum partition is denoted by $0_k=\{\{1\}, \cdots , \{k\}\}$ .
    \item meet operation $\wedge$: \ For $\pi_1, \pi_2 \in \mathcal{P}(k)$, $\pi_1 \wedge \pi_2$ is the greatest lower bound of $\pi_1$ and $\pi_2$.
    \item join operation $\vee$: For $\pi_1, \pi_2 \in \mathcal{P}(k)$, $\pi_1 \vee \pi_2$ is the least upper bound of $\pi_1$ and $\pi_2$.
\end{itemize}

We define incidence algebra on the lattice $\mathcal{P}(k)$,
\[I(\mathcal{P}(k))=\{f:\mathcal{P}(k) \times \mathcal{P}(k) \to \mathbb{C}\ | \ f(\pi_1,\pi_2)=0 \mbox{ if } \pi_{1} \nleq \pi_{2}\}\]
with an associative convolution *, for any $\pi_1, \pi_2 \in \mathcal{P}(k)$,
\begin{equation*}
f * g\left(\pi_{1}, \pi_{2}\right):=\sum_{\pi_{3}: \pi_{1} \leq \pi_{3} \leq \pi_{2}} f\left(\pi_{1}, \pi_{3}\right) g\left(\pi_{3}, \pi_{2}\right). 
\end{equation*}
$I(\mathcal{P}(k))$ has the unit $\delta$, Kronecker delta, 
\[
\delta(\pi_1, \pi_2) 
= \begin{cases}
    1 & \mbox{if } \pi_1 = \pi_2,\\
    0 & \mbox{otherwise}.
\end{cases}
\]

We define the Zeta function $\zeta$ by 
\begin{equation*}
\zeta\left(\pi_{1}, \pi_{2}\right)
= \begin{cases}
    1 & \mbox{if }\pi_{1} \leq \pi_{2},\\
    0 & \mbox{otherwise},
\end{cases}
\end{equation*}
for $\pi_1, \pi_2 \in \mathcal{P}(k)$.

We also introduce the Möbius function $\mu$. We first define \[ \mu(0_k, 1_k)=(-1)^{k-1} (k-1)!. \] 
For any $\pi_{1}, \pi_{2} \in \mathcal{P}(k)$, when we suppose that $\pi_2 = \{B_1, \dots , B_{\# \pi_2}\}$ and that each block $B_i$ of $\pi_2$ is partitioned into $\lambda_i$ blocks in $\pi_1$, the interval $\left[\pi_{1}, \pi_{2}\right]$ in the lattice $\mathcal{P}(k)$ is isomorphic to
$\left[0_{\lambda_1}, 1_{\lambda_1}\right] \times \cdots \times [0_{\lambda_{\# \pi_2}}, 1_{\lambda_{\# \pi_2}}]$. Now we define  
$$
\mu\left(\pi_{1}, \pi_{2}\right)
=\prod_{i=1}^{\# \pi_2} \mu(0_{\lambda_i}, 1_{\lambda_i})
=\prod_{i=1}^{\# \pi_2}(-1)^{\lambda_i-1}(\lambda_i-1)!.
$$

We need the fact 
that the convolution product of the Möbius function $\mu$ and the Zeta function $\zeta$ is equal to the Kronecker delta $\delta$,
\begin{equation}\label{MöbiusInversion}
\mu * \zeta=\delta.
\end{equation}
We call this {\em Möbius inversion formula}. 
(See \cite{S_MR1442260} Section 3.7)

Finally, we use the multi-index notation $\mathbf{i}=(i_1,\dots,i_k), \ \mathbf{j}=(j_1,\dots,j_k)$.
For a multi-index $\mathbf{i}= (i_1,...,i_k) \in \{1, \cdots , N\}^k$, we view $\mathbf{i}$ as a level partition $\Pi_\mathbf{i} \in \mathcal{P}(k)$ so that $1 \leq l,m \leq k$ belongs to the same block of $\Pi_\mathbf{i}$ if and only if $i_l=i_m$.

\subsection{Weingarten calculus for Random Permutation Matrices}

We first consider the Weingarten calculus for random permutation matrices. Here, the symmetric group $S_N$ is identified as the group of permutation matrices. 

In the sequel, we use the Pochhammer symbol notation, 
\[(x)_n:=x(x-1)\cdots (x-n+1) = \frac{x!}{(x-n)!}.\]
Our first result is the Weingarten formula for random permutations. Note that a variant of this theorem appears in \cite{BCS2012_MR2917777} and was part of an unpublished project between the first-named author and Ken Dykema. 

\begin{theorem}\label{thm:non-CenteredWg}
For any $k \geq 1$ and given indices $\mathbf{i}=(i_1,\dots,i_k)$ and $\mathbf{j}=(j_1,\dots,j_k)$, where $i_1,\dots,i_k, j_1, \dots,j_k \in \{1,2, \ldots, N\}$,
there exists some Weingarten function $\Wg_k$ so that 
\begin{equation}
\int_{g \in S_{N}} g_{i_{1} j_{1}} \cdots g_{i_{k} j_{k}} \d g 
=\sum_{\sigma, \tau \in \mathcal{P}(k)} \Wg_k(\sigma, \tau, N) \zeta(\sigma, \Pi_\mathbf{i}) \zeta(\tau, \Pi_\mathbf{j}).
\end{equation}
(Note that the integral $\d g$ runs over the normalized Haar measure on $S_N$.)\ Moreover, we have the following explicit formula for $\Wg_k(\sigma, \tau, N)$:
\begin{equation}\label{eq:def_WgPerm}
\begin{split}
\Wg_k(\sigma, \tau, N)
&=\sum_{\pi \leq \sigma \wedge \tau \in \mathcal{P}(k)} \mu\left(\pi, \sigma \right) \mu\left(\pi, \tau\right) \frac{1}{(N)_{\# \pi}}.
\end{split}
\end{equation}
\end{theorem}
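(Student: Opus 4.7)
The plan is to compute the left-hand side directly as an elementary counting problem, and then show that the right-hand side, with $\Wg_k$ defined by the given formula, collapses to the same expression via two applications of Möbius inversion.

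First I would observe that each entry $g_{ij}$ takes values in $\{0,1\}$ and $g_{ij}=1$ iff $g(i)=j$. Therefore the product $g_{i_1 j_1}\cdots g_{i_k j_k}$ is $1$ exactly when $g(i_l)=j_l$ for every $l$, and $0$ otherwise. This system of constraints is consistent if and only if the map $i_l \mapsto j_l$ is well-defined and injective, i.e.\ $i_l=i_m \Longleftrightarrow j_l=j_m$, which is exactly the condition $\Pi_\mathbf{i}=\Pi_\mathbf{j}$. When this holds, $g$ is prescribed on $\#\Pi_\mathbf{i}$ distinct elements of $\{1,\ldots,N\}$ and may be extended in $(N-\#\Pi_\mathbf{i})!$ ways. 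Normalising by $N!$ yields
\begin{equation*}
\int_{g \in S_N} g_{i_1 j_1}\cdots g_{i_k j_k}\,\d g \;=\; \frac{1}{(N)_{\#\Pi_\mathbf{i}}}\,\delta(\Pi_\mathbf{i},\Pi_\mathbf{j}).
\end{equation*}

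Next I would substitute the explicit formula \eqref{eq:def_WgPerm} into the right-hand side and swap the order of summation to put the sum over $\pi$ outermost:
\begin{equation*}
\sum_{\sigma,\tau \in \mathcal{P}(k)} \Wg_k(\sigma,\tau,N)\,\zeta(\sigma,\Pi_\mathbf{i})\,\zeta(\tau,\Pi_\mathbf{j})
= \sum_{\pi \in \mathcal{P}(k)} \frac{1}{(N)_{\#\pi}} \Bigl(\sum_{\sigma} \mu(\pi,\sigma)\,\zeta(\sigma,\Pi_\mathbf{i})\Bigr) \Bigl(\sum_{\tau} \mu(\pi,\tau)\,\zeta(\tau,\Pi_\mathbf{j})\Bigr),
\end{equation*}
where the constraints $\pi\le\sigma\wedge\tau$, $\sigma\le\Pi_\mathbf{i}$, $\tau\le\Pi_\mathbf{j}$ are automatically enforced by the vanishing of $\mu$ and $\zeta$ outside their respective intervals. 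By Möbius inversion \eqref{MöbiusInversion}, each inner sum equals the Kronecker delta: $\sum_\sigma \mu(\pi,\sigma)\zeta(\sigma,\Pi_\mathbf{i}) = \delta(\pi,\Pi_\mathbf{i})$ and similarly for $\tau$. Therefore the right-hand side reduces to
\begin{equation*}
\sum_{\pi} \frac{1}{(N)_{\#\pi}}\,\delta(\pi,\Pi_\mathbf{i})\,\delta(\pi,\Pi_\mathbf{j}) \;=\; \frac{1}{(N)_{\#\Pi_\mathbf{i}}}\,\delta(\Pi_\mathbf{i},\Pi_\mathbf{j}),
\end{equation*}
matching the direct computation.

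There is no real obstacle here: the key conceptual point is simply that the integral depends on the indices only through the pair of level partitions $(\Pi_\mathbf{i},\Pi_\mathbf{j})$, and the formula for $\Wg_k$ is precisely the double Möbius transform of the function $(\sigma,\tau)\mapsto \frac{1}{(N)_{\#\sigma}}\delta(\sigma,\tau)$. The mildest care needed is to check that swapping the order of summation is legitimate (all sums are finite) and that the interval conventions on $\mu$ are respected, i.e.\ that $\mu(\pi,\sigma)=0$ unless $\pi\le\sigma$, so that no spurious terms enter after the rearrangement.
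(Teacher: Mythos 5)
Your proposal is correct and follows essentially the same route as the paper: both compute the integral directly as $\delta(\Pi_{\mathbf{i}},\Pi_{\mathbf{j}})/(N)_{\#\Pi_{\mathbf{i}}}$ and then verify that the stated $\Wg_k$ reproduces this via a double application of the M\"obius inversion formula \eqref{MöbiusInversion} after reordering the (finite) sums. The only cosmetic difference is that the paper writes the candidate function as a double sum over $\sigma'\le\sigma$, $\tau'\le\tau$ with a factor $\delta(\sigma',\tau')$, which is identical to your single sum over $\pi\le\sigma\wedge\tau$.
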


\begin{proof}
One sees by inspection that we have
\begin{equation}\label{eq:ExpRandPerm}
\int_{g \in S_N} g_{i_{1} j_{1}} \cdots g_{i_{k} j_{k}} \d g
=\delta (\Pi_{\mathbf{i}}, \Pi_{\mathbf{j}}) \frac{1}{(N)_{\# \Pi_{\mathbf{i}}}}.
\end{equation}

If we define a function $W$ on $\mathcal{P}(k)^2$ as
\[
W(\sigma, \tau) 
:=\sum_{\sigma^\prime \leq \sigma, \tau^\prime \leq \tau \in \mathcal{P}(k)} \mu(\sigma^\prime, \sigma) \mu(\tau^\prime, \tau) \delta(\sigma^\prime, \tau^\prime) \frac{1}{(N)_{\# \sigma^\prime}}
\]
for $\sigma, \tau \in \mathcal{P}(k)$, then it can be inverted by Möbius inversion formula (\ref{MöbiusInversion}), for any $\Pi_{\mathbf{i}}, \Pi_{\mathbf{j}} \in \mathcal{P}(k)$,
\[
\begin{split}
&\sum_{\sigma, \tau \in \mathcal{P}(k)} W(\sigma, \tau) \zeta(\sigma, \Pi_\mathbf{i}) \zeta(\tau, \Pi_\mathbf{j})\\
=&\sum_{\sigma, \tau \in \mathcal{P}(k)} \sum_{\substack{\sigma^\prime \leq \sigma, \\ \tau^\prime \leq \tau}} \mu(\sigma^\prime, \sigma) \mu(\tau^\prime, \tau) \delta(\sigma^\prime, \tau^\prime) \frac{1}{(N)_{\# \sigma^\prime}} \zeta(\sigma, \Pi_\mathbf{i}) \zeta(\tau, \Pi_\mathbf{j})\\
=& \sum_{\sigma^\prime= \tau^\prime \in \mathcal{P}(k)} 
\sum_{\sigma: \sigma^\prime \leq \sigma \leq \Pi_{\mathbf{i}}} \mu(\sigma^\prime, \sigma)\zeta(\sigma, \Pi_\mathbf{i}) 
\sum_{\tau: \tau^\prime \leq \tau \leq \Pi_{\mathbf{j}}} \mu(\tau^\prime, \tau)\zeta(\tau, \Pi_\mathbf{j}) 
\frac{1}{(N)_{\# \sigma^\prime}}\\
=& \sum_{\sigma^\prime= \tau^\prime \in \mathcal{P}(k)} \delta(\sigma^\prime, \Pi_{\mathbf{i}}) \delta(\tau^\prime, \Pi_{\mathbf{j}})\frac{1}{(N)_{\# \sigma^\prime}}\\
=& \ \delta (\Pi_{\mathbf{i}}, \Pi_{\mathbf{j}}) \frac{1}{(N)_{\# \Pi_{\mathbf{i}}}}.
\end{split}
\]
Thus, combining with equation (\ref{eq:ExpRandPerm}) implies that the function $W$ satisfies
\[
\int_{g \in S_N} g_{i_{1} j_{1}} \cdots g_{i_{k} j_{k}} \d g
=\sum_{\sigma, \tau \in \mathcal{P}(k)} W(\sigma, \tau) \zeta(\sigma, \Pi_\mathbf{i}) \zeta(\tau, \Pi_\mathbf{j}),
\]
and this ends the proof.
\end{proof}

Note that it is unusual in Weingarten calculus that there exists such a simple closed formula for $\Wg$. This is due to the fact that $\int_{g \in S_{N}} g_{i_{1} j_{1}} \cdots g_{i_{k} j_{k}} \d g$ can be computed by different means, which is a difficult task for other compact groups.

As a corollary, we obtain the following estimate of the Weingarten function:

\begin{corollary}
Every entry of the Weingarten matrix $\Wg_k$ is a polynomial in $N$ divided by 
$(N)_k=N(N-1)\cdots (N-k+1)$.    
\end{corollary}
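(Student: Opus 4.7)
The plan is to argue directly from the explicit formula \eqref{eq:def_WgPerm} for $\Wg_k(\sigma,\tau,N)$, which expresses this quantity as a finite sum of terms of the form $\mu(\pi,\sigma)\mu(\pi,\tau)\cdot (N)_{\#\pi}^{-1}$. The Möbius values $\mu(\pi,\sigma)$ and $\mu(\pi,\tau)$ are integers independent of $N$, so the only $N$-dependence lives in the factor $(N)_{\#\pi}^{-1}$. The whole issue is thus to put all these rational functions of $N$ over the common denominator $(N)_k$.

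First I would observe the Pochhammer telescoping identity
\begin{equation*}
(N)_k \;=\; N(N-1)\cdots(N-k+1) \;=\; (N)_{\#\pi}\cdot (N-\#\pi)_{k-\#\pi},
\end{equation*}
valid because $\#\pi \le k$ (since $\pi \in \mathcal{P}(k)$). This gives
\begin{equation*}
\frac{1}{(N)_{\#\pi}} \;=\; \frac{(N-\#\pi)_{k-\#\pi}}{(N)_k},
\end{equation*}
and the numerator on the right is a polynomial in $N$ of degree $k-\#\pi$ with integer coefficients. Substituting into \eqref{eq:def_WgPerm} yields
\begin{equation*}
\Wg_k(\sigma,\tau,N) \;=\; \frac{1}{(N)_k}\sum_{\pi \le \sigma\wedge\tau}\mu(\pi,\sigma)\mu(\pi,\tau)(N-\#\pi)_{k-\#\pi},
\end{equation*}
which exhibits $\Wg_k(\sigma,\tau,N)$ as a polynomial in $N$ divided by $(N)_k$.

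The argument is essentially bookkeeping; there is no real obstacle. The only point worth checking carefully is that the factorization of $(N)_k$ holds as written and that $\#\pi \le k$ in all terms of the sum, both of which are immediate from the definitions. One can optionally record the degree bound: the numerator polynomial has degree at most $k - \min_\pi \#\pi = k-1$ (the minimum being attained at $\pi = 1_k$ when $\sigma = \tau = 1_k$), which already hints at the $1/N$ behavior of $\Wg_k$ that will be exploited in the asymptotic sections of the paper.
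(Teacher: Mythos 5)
Your argument is correct and is exactly the reasoning the paper leaves implicit: the corollary is stated as an immediate consequence of the closed formula \eqref{eq:def_WgPerm}, and your factorization $(N)_k=(N)_{\#\pi}\,(N-\#\pi)_{k-\#\pi}$ together with the integrality of the M\"obius values is the bookkeeping that makes it precise. The optional degree remark is also consistent with the paper's later asymptotics, so there is nothing to add.
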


Note that similar results are known for other types of Weingarten calculus (e.g., unitary \cite{C2003_MR1959915}, orthogonal \cite{CM_MR3680193}, quantum unitary \cite{BC2007_MR2341011}), but in each case, the multiplication factor
$(N)_k$
has to be replaced by a polynomial of a much larger degree, and the techniques of proof are much more involved.

Next, we give estimates on the asymptotic behavior of $\Wg_k$ as $N$ is large.
The following corollary evaluates the asymptotics of the $S_n$ Weingarten function.
It is of independent algebraic interest and will also serve as a benchmark to compare the gain in decay one achieves by centering random permutations. 
\begin{corollary}\label{cor:Asym_WgPerm}
The Weingarten function as described above satisfies
\begin{equation}\label{eq:asym_WgPerm}
\Wg_k(\sigma, \tau, N)=\mu(\sigma \wedge \tau, \sigma) \mu(\sigma \wedge \tau, \tau) N^{-\# (\sigma \wedge \tau)}(1+O(N^{-1})). 
\end{equation}
\end{corollary}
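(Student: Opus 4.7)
The plan is to extract the corollary directly from the explicit sum formula
\[
\Wg_k(\sigma, \tau, N) = \sum_{\pi \leq \sigma \wedge \tau} \mu(\pi, \sigma)\, \mu(\pi, \tau)\, \frac{1}{(N)_{\#\pi}}
\]
given in Theorem \ref{thm:non-CenteredWg}, by identifying the dominant term in $N$ and showing every other term is smaller by a factor $N^{-1}$ or better.

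First, I would observe that the falling factorial admits the large-$N$ expansion
\[
\frac{1}{(N)_m} = \frac{1}{N^m}\Bigl(1 + O\bigl(N^{-1}\bigr)\Bigr),
\]
uniformly in $m$ as long as $m$ is bounded (which it is, since $m \leq k$ in our sum). Hence every summand contributes $\mu(\pi,\sigma)\mu(\pi,\tau)N^{-\#\pi}(1+O(N^{-1}))$.

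Second, I would use the fact that in the partition lattice the partial order $\pi \leq \pi'$ means $\pi$ is a refinement of $\pi'$, so $\#\pi \geq \#\pi'$, with equality if and only if $\pi = \pi'$. Applied to the constraint $\pi \leq \sigma \wedge \tau$, this shows that the minimal value of $\#\pi$ on the index set is $\#(\sigma\wedge\tau)$, attained uniquely at $\pi = \sigma\wedge\tau$. All other $\pi$ in the sum satisfy $\#\pi \geq \#(\sigma\wedge\tau)+1$, and therefore contribute $O(N^{-\#(\sigma\wedge\tau)-1})$. Isolating the unique leading term yields
\[
\Wg_k(\sigma,\tau,N) = \mu(\sigma\wedge\tau,\sigma)\,\mu(\sigma\wedge\tau,\tau)\,N^{-\#(\sigma\wedge\tau)} + O\bigl(N^{-\#(\sigma\wedge\tau)-1}\bigr),
\]
which is exactly \eqref{eq:asym_WgPerm} after factoring out $N^{-\#(\sigma\wedge\tau)}$. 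Note that the Möbius values $\mu(\sigma\wedge\tau,\sigma)$ and $\mu(\sigma\wedge\tau,\tau)$ are nonzero (being products of $(-1)^{\lambda_i-1}(\lambda_i-1)!$), so the prefactor in $(1+O(N^{-1}))$ is well-defined and the two forms of the error are equivalent.

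There is essentially no serious obstacle here; the only minor point worth stating carefully is the observation that the leading coefficient is nonzero, which justifies the multiplicative form of the remainder term. Since the number of partitions $\pi$ appearing in the sum is bounded by the Bell number $B_k$ and all $\mu$-values are bounded by $(k-1)!$ uniformly in $N$, the $O(\cdot)$ constants depend only on $k$, so the estimate is in fact uniform over all $\sigma,\tau \in \mathcal{P}(k)$ for fixed $k$.
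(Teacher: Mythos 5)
Your proposal is correct and is essentially the paper's own argument: the paper's proof consists of the single observation that the claim ``follows directly from the closed formula and the fact that the only leading term in the sum is $\sigma \wedge \tau$,'' and you have simply spelled out the details (the expansion $1/(N)_m = N^{-m}(1+O(N^{-1}))$, the fact that $\pi \leq \sigma\wedge\tau$ forces $\#\pi \geq \#(\sigma\wedge\tau)$ with equality only at $\pi=\sigma\wedge\tau$, and the nonvanishing of the M\"obius prefactor that justifies the multiplicative error form).
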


\begin{proof}
This follows directly from the closed formula and the fact that the only leading term in the sum is $\sigma \wedge \tau$.
\end{proof}

Note that it was proved in Proposition 3.4, of \cite{BCS2012_MR2917777} that
\begin{equation}\label{eq:asym_BCS2012}
\Wg_k(\sigma, \tau, N)=O\left(N^{\#(\sigma \vee \tau)-\# \sigma-\# \tau}\right)
\end{equation}
holds. This is consistent with our estimate $O\left(N^{-\# (\sigma \wedge \tau)}\right)$, since by semi-modularity of $\mathcal{P}(k)$ we have
\begin{equation}\label{ineq:MeetJoint}
-\# (\sigma \wedge \tau) \leq\#(\sigma \vee \tau)-\#\sigma-\# \tau 
\end{equation}
for any $\sigma, \tau \in \mathcal{P}(k)$. However, in many cases, the equality does not hold in (\ref{ineq:MeetJoint}), and the precise asymptotics of (\ref{eq:asym_WgPerm}) are strictly stronger than the estimate (\ref{eq:asym_BCS2012}). However, note that our result is specific to the group $S_{N}$, whereas the result of \cite{BCS2012_MR2917777} holds for more general easy quantum symmetric groups. 

\section{Centered Permutation Matrices}\label{sec:CenteredWg}

\subsection{Weingarten calculus for Centered Random Permutation Matrices}

We now consider the {\em centered} random permutation matrices. Throughout this paper, a {\em centered random permutation matrix} obtained from $g \in S_N$, denoted by $[g]$, is defined as
\[
[g]= g - \int_{h \in S_N} h \d h.
\]
What will matter mainly to us is that $[g]_{i j}=[g_{i j}] = g_{i j}-\frac{1}{N}$.

Before we formulate the Weingarten formula for centered random permutation matrices, we introduce the following notation, which will be used throughout this paper. 
For $\pi \in \mathcal{P}(k)$, we denote by
\[D(\pi)= \{j \in [k](=\{1,\cdots ,k\})\ | \ \mbox{the singleton }\{j\} \mbox{ is a block of } \pi \}\]
the singleton set.
For $\pi \in \mathcal{P}(k)$ and $M \subset [k]$, the restriction of $\pi$ to the set $M$ is denoted by $\pi|_M$. 

In the following theorem, we obtain the Weingarten formula for centered random permutation matrices:

\begin{theorem}\label{thm:CenteredWg1}
For any $k \geq 1$ and given indices $\mathbf{i}=(i_1,\dots,i_k)$ and $\mathbf{j}=(j_1,\dots,j_k)$, where $i_1,\dots,i_k, j_1, \dots,j_k \in \{1,2, \ldots, N\}$,
\begin{equation}
\int_{g \in S_{N}} [g_{i_{1} j_{1}}] \cdots [g_{i_{k} j_{k}}] \d g 
=\sum_{\sigma, \tau \in \mathcal{P}(k)} \mathring{\Wg}_k(\sigma,\tau,N) \ \zeta(\sigma, \Pi_\mathbf{i}) \zeta(\tau, \Pi_\mathbf{j}), 
\end{equation}
where 
\begin{equation}\label{eq:Def_CenteredWg}
\begin{split}
&\mathring{\Wg}_k(\sigma, \tau, N)\\
=& \sum_{i = 0}^{ |D(\sigma \vee \tau)|} \tbinom{ |D(\sigma \vee \tau)|}{i} (-1)^i \sum_{\pi \leq \sigma \wedge \tau \in \mathcal{P}(k)} \mu(\pi, \sigma)\mu(\pi,\tau) N^{-i} \ 
\frac{1}{(N)_{\# \pi -i}}.
\end{split}
\end{equation}
\end{theorem}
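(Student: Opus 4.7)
The plan is to expand the centered product via the binomial identity $[g_{i_r j_r}] = g_{i_r j_r} - \tfrac{1}{N}$ and reduce to the non-centered Weingarten formula established in Theorem \ref{thm:non-CenteredWg}. Distributing,
\[
\int_{g\in S_N}\prod_{r=1}^k [g_{i_r j_r}]\,\d g = \sum_{S\subseteq [k]}\Bigl(-\tfrac{1}{N}\Bigr)^{k-|S|}\int_{g\in S_N}\prod_{r\in S}g_{i_r j_r}\,\d g,
\]
and each inner integral is then expanded using Theorem \ref{thm:non-CenteredWg} as a double sum over $\mathcal{P}(S)^2$. To combine these uniformly into a formula indexed by $\mathcal{P}(k)^2$, I use the canonical lift $\sigma_S \in \mathcal{P}(S) \mapsto \tilde\sigma_S \in \mathcal{P}(k)$ that adjoins each element of $[k]\setminus S$ as a singleton; this lift satisfies $\zeta(\sigma_S, \Pi_\mathbf{i}|_S) = \zeta(\tilde\sigma_S, \Pi_\mathbf{i})$. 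Swapping the sums, the pairs $(\sigma,\tau)$ that receive contributions from a given $S$ are exactly those with $[k]\setminus S \subseteq D(\sigma)\cap D(\tau) =: E$. The same Möbius inversion argument as in the proof of Theorem \ref{thm:non-CenteredWg} then identifies
\[
\mathring{\Wg}_k(\sigma,\tau,N) = \sum_{S\supseteq [k]\setminus E}\Bigl(-\tfrac{1}{N}\Bigr)^{k-|S|}\Wg_{|S|}(\sigma|_S,\tau|_S,N).
\]

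To match the stated formula \eqref{eq:Def_CenteredWg}, I invoke the identity $D(\sigma\vee\tau) = D(\sigma)\cap D(\tau) = E$: a singleton block of the join is precisely a block $\{j\}$ which is a singleton of both $\sigma$ and $\tau$. Parametrizing valid $S$ by $T := S\cap E$ and $i := |E|-|T|$, the sum over $S$ reorganizes as a sum over $i \in \{0,\ldots,|E|\}$ with multiplicity $\binom{|E|}{i}$, producing the prefactor $\binom{|D(\sigma\vee\tau)|}{i}(-1)^i N^{-i}$. It then remains to verify
\[
\Wg_{|S|}(\sigma|_S,\tau|_S,N) = \sum_{\pi \leq \sigma\wedge\tau}\mu(\pi,\sigma)\mu(\pi,\tau)\frac{1}{(N)_{\#\pi - i}}.
\]
This follows from the closed formula \eqref{eq:def_WgPerm} together with the observation that every $j \in E$ is a singleton of both $\sigma$ and $\tau$, hence of $\sigma\wedge\tau$, and hence of every $\pi \leq \sigma\wedge\tau$. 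The map $\pi \mapsto \pi|_S$ is therefore a bijection between $\{\pi \leq \sigma\wedge\tau\}$ and $\{\pi' \leq \sigma|_S \wedge \tau|_S\}$, satisfying $\#\pi = \#\pi' + i$ and $\mu(\pi,\sigma) = \mu(\pi|_S,\sigma|_S)$ (and analogously for $\tau$) by multiplicativity of $\mu$ across blocks and the triviality of $\mu$ on singleton blocks.

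The main obstacle lies in this final bookkeeping step: one must verify simultaneously that restriction to $S$ decreases $\#\pi$ by exactly $i$, preserves both Möbius weights, and that the number of valid $S$ for each $i$ equals $\binom{|E|}{i}$. All three facts follow from the structural observations that $D(\sigma\vee\tau) = E$ and that non-singleton blocks of $\sigma$ and $\tau$ are automatically contained in $S$ (since they lie in $[k]\setminus D(\sigma) \subseteq [k]\setminus E \subseteq S$), but careful assembly of these pieces is what ultimately produces the compact closed form \eqref{eq:Def_CenteredWg}.
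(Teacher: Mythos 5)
Your proposal is correct and follows essentially the same route as the paper's own proof: binomial expansion of $[g_{ij}]=g_{ij}-\tfrac1N$ over subsets, application of Theorem \ref{thm:non-CenteredWg}, lifting partitions of $S$ to $\mathcal{P}(k)$ by adjoining singletons, and swapping the sums so that the admissible subsets are those with $[k]\setminus S\subseteq D(\sigma\vee\tau)$. The only difference is that you spell out the final reindexing (the identity $D(\sigma\vee\tau)=D(\sigma)\cap D(\tau)$, the count $\binom{|E|}{i}$, and the bijection $\pi\mapsto\pi|_S$ with $\#\pi=\#(\pi|_S)+i$ and preservation of the M\"obius weights), which the paper asserts without detail; your bookkeeping is accurate.
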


\begin{proof}
By expanding, using $[g_{i j}] = g_{i j}-\frac{1}{N}$ and applying Theorem\ref{thm:non-CenteredWg}, we have 
\[
\begin{split}
&\int_{g \in S_{N}} [g_{i_{1} j_{1}}] \cdots [g_{i_{k} j_{k}}] \d g \\
=& \sum_{M \subseteq [k]} \left(- \frac{1}{N}\right)^{k-|M|} 
\int_{g \in S_{N}} \left( \prod_{m \in M} g_{i_m j_m}\right) \d g \\
=& \sum_{M \subseteq [k]} \left(- \frac{1}{N}\right)^{k-|M|} 
\sum_{\sigma^\prime,\tau^\prime \in \mathcal{P}(|M|)} 
\Wg_{|M|}(\sigma^\prime, \tau^\prime, N) \zeta(\sigma^\prime, \Pi_{\mathbf{i}}|_M) \zeta(\tau^\prime, \Pi_{\mathbf{j}}|_M). 
\end{split}
\]
Here, we note that there is a natural one-to-one correspondence $\mathcal{P}(|M|) \to \{\sigma \in \mathcal{P}(k) \ |\ \sigma|_{[k]\setminus M}=0_{k-|M|} \}$ that maps $\sigma^\prime \mapsto \sigma$ so that
$\sigma|_M =\sigma^\prime, \ \sigma|_{[k]\setminus M}=0_{k-|M|}$. Thus, 
\[
\begin{split}
&\sum_{\sigma^\prime,\tau^\prime \in \mathcal{P}(|M|)} \Wg_{|M|}(\sigma^\prime, \tau^\prime, N) \zeta(\sigma^\prime, \Pi_{\mathbf{i}}|_M) \zeta(\tau^\prime, \Pi_{\mathbf{j}}|_M)\\
&=\sum_{\substack{\sigma,\tau \in \mathcal{P}(k) \text{ s.t.} \\ \sigma|_{[k] \backslash M} = \tau|_{[k]\backslash M} = 0_{k-|M|}}} \Wg_{|M|}(\sigma|_M, \tau|_M, N) \zeta(\sigma, \Pi_{\mathbf{i}}) \zeta(\tau, \Pi_{\mathbf{j}}).
\end{split}
\]
This yields 
\[
\begin{split}
&\int_{g \in S_{N}} [g_{i_{1} j_{1}}] \cdots [g_{i_{k} j_{k}}] \d g \\
=& \sum_{\sigma,\tau \in \mathcal{P}(k)} \left( \sum_{\substack{M \subseteq [k] \text{ s.t.} \\ [k]\backslash M \subseteq D(\sigma \vee \tau)}}
\Wg_{|M|}(\sigma|_M, \tau|_M, N) \left(- \frac{1}{N}\right)^{k-|M|} \right) \zeta(\sigma, \Pi_{\mathbf{i}}) \zeta(\tau, \Pi_{\mathbf{j}})\\
=& \sum_{\sigma, \tau \in \mathcal{P}(k)} \mathring{\Wg}_k(\sigma,\tau, N) \ \zeta(\sigma, \Pi_{\mathbf{i}}) \zeta(\tau, \Pi_{\mathbf{j}}),
\end{split}
\]

where
\[
\begin{split}
&\mathring{\Wg}_k(\sigma,\tau, N)
:=\sum_{\substack{M \subseteq [k] \text{ s.t.}\\ [k]\backslash M \subseteq D(\sigma \vee \tau)}}
\Wg_{|M|}(\sigma|_M, \tau|_M, N) \left(- \frac{1}{N}\right)^{k-|M|} \\
&=\sum_{i = 0}^{ |D(\sigma \vee \tau)|} \tbinom{ |D(\sigma \vee \tau)|}{i} (-1)^i \sum_{\pi \leq \sigma \wedge \tau \in \mathcal{P}(k)} \mu(\pi, \sigma)\mu(\pi,\tau) N^{-i} \ \frac{1}{(N)_{\# \pi -i}}.
\end{split}
\]
\end{proof}

\begin{remark}
    The form of $\mathring{\Wg}_k(\sigma, \tau, N)$ in Theorem \ref{thm:CenteredWg1} does not give us much information about it. For example, one cannot deduce its sign or its leading order as $N \to \infty$.
    The sequel of this paper aims to address this question. 
\end{remark}

\subsection{A specific coefficient of the Weingarten matrix}\label{section-charles-element}
An important drawback of equation \eqref{eq:Def_CenteredWg} is that it involves signed coefficients. 
We want to obtain an improved -- that is, nonsigned ---- formulation of $\mathring{\Wg}_k$ to enable us to study its algebraic and asymptotic properties. 
This situation can be compared with the orthogonal or unitary Weingarten function: the original formula of \cite{C2003_MR1959915} was signed, whereas the subsequent formulations by \cite{CMN_MR4415894} are not signed. 

In the case of orthogonal or unitary Weingarten calculus, Jucys Murphy elements and special variants of Cayley graphs on the symmetric groups were needed. 
For the purpose of the centered symmetric random matrices, it turns out that an important quantity plays a role, which we introduce and study in this section.

First, we consider the following special entry of $\mathring{\Wg}_k$:
\begin{equation}\label{eq:Def_a_k}
a_k (N):=\mathring{\Wg}_k(0_k, 0_k, N)
=\int_{g \in S_N} [g_{1 1}]\cdots [g_{k k}] \d g.
\end{equation}
Let us introduce the following notation: $n!!:=\prod_{m=1}^{(n+1)/2}(2m-1)$ for a positive odd integer $n$, and $(-1)!!=1$.
The quantity $a_k(N)=\mathring{\Wg}_k(0_k, 0_k, N)$ can be computed explicitly as follows:
\begin{proposition}\label{prop:a_k}
The quantity $a_k (N)$ satisfies the equation
\begin{equation}\label{eq:a_k}
a_k (N)
=\sum_{l=0}^k \tbinom{k}{l} 
\tfrac{1}{(N)_l}
(-N)^{-(k-l)}.
\end{equation}

In particular, $a_k(N)>0$ for all $k \geq 2$ and $a_0 (N)=1, a_1(N)=0$.

Moreover, if $N \gg k^3$ and $k \geq 1$, then
\begin{equation}\label{eq:UniformBound_a_k}
a_k(N) = \tilde{a}_k(N)\ (1+O(k^3/N)),
\end{equation}
where 
\begin{equation}\label{eq:Def_tilde_a_k}
\tilde{a}_k(N) =
\begin{cases}
    (2k^\prime-1)!! N^{-3k^\prime} &\mbox{if }\ k=2k^\prime\\
    \frac{4k^\prime}{3} (2k^\prime+1)!! N^{-3k^\prime-2} & \mbox{if }\ k=2k^\prime+1.
\end{cases}
\end{equation}

\end{proposition}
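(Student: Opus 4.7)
The plan is to establish the four assertions of Proposition \ref{prop:a_k} in sequence: the closed formula \eqref{eq:a_k}, the boundary values $a_0(N) = 1$ and $a_1(N) = 0$, positivity for $k \geq 2$, and the asymptotic \eqref{eq:UniformBound_a_k}. For \eqref{eq:a_k}, I would expand
\[
\prod_{i=1}^k \bigl(g_{ii} - 1/N\bigr) = \sum_{S \subseteq [k]} (-1/N)^{k - |S|} \prod_{i \in S} g_{ii}
\]
and integrate term by term, using the elementary identity $\int_{g \in S_N} \prod_{i \in S} g_{ii}\,\d g = \mathbb{P}(g\text{ fixes every }i \in S) = 1/(N)_{|S|}$; grouping by $l = |S|$ produces \eqref{eq:a_k}. (The same formula drops out of Theorem \ref{thm:CenteredWg1} at $\sigma = \tau = 0_k$, since $\pi \leq 0_k$ forces $\pi = 0_k$, $\mu(0_k, 0_k) = 1$, $\# \pi = k$, and $|D(0_k \vee 0_k)| = k$.) The boundary cases then follow by inspection: $a_0(N) = 1$ (empty product) and $a_1(N) = \E[g_{11}] - 1/N = 0$.

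Positivity for $k \geq 2$ is the delicate step, since \eqref{eq:a_k} is a signed sum. My plan is to reformulate $a_k(N)$ via the moment--cumulant formula. Writing $Z_i := g_{ii}$ and $Y_i := [g_{ii}] = Z_i - 1/N$, one has $\kappa_1(Y_i) = 0$, and since $\kappa_m(Y_B) = \kappa_m(Z_B)$ for $|B| \geq 2$ the moment--cumulant identity yields
\[
a_k(N) = \sum_{\substack{\pi \in \mathcal{P}(k) \\ D(\pi) = \emptyset}} \prod_{B \in \pi} c_{|B|}(N), \qquad c_m(N) := \kappa_m(Z_1,\dots,Z_m).
\]
Positivity of $a_k$ thereby reduces to positivity of the joint cumulants $c_m(N)$ for $m \geq 2$. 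The case $c_2(N) = 1/(N^2(N-1))$ is direct, and for $m \geq 3$ I would proceed by induction on $m$ using the Möbius representation $c_m(N) = \sum_\pi \mu(\pi, 1_m) \prod_B 1/(N)_{|B|}$, reorganising it into a ratio $R_m(N)/Q_m(N)$ with $Q_m$ manifestly positive and $R_m$ a polynomial in $N$ with nonnegative coefficients (the small-$m$ checks $c_3(N) = 4/[N^3(N-1)(N-2)]$ and $c_4(N) = 6(5N-6)/[N^4(N-1)^2(N-2)(N-3)]$ confirm this pattern and guide the induction).

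For the asymptotic, I would substitute $1/(N)_l = N^{-l}\exp\!\bigl(\sum_{r \geq 1} S_r(l)/(rN^r)\bigr)$ with $S_r(l) := \sum_{j=0}^{l-1} j^r$ into \eqref{eq:a_k} and interchange the summations. The operator $\sum_{l=0}^k \binom{k}{l}(-1)^{k-l}(\cdot) = \Delta^k(\cdot)|_{l=0}$ annihilates every polynomial in $l$ of degree $< k$. Expanding the exponential as $\sum_{(\alpha_r)} \prod_r (S_r(l)/(rN^r))^{\alpha_r}/\alpha_r!$, a multi-index $(\alpha_r)$ contributes a term of $l$-degree $\sum_r (r+1)\alpha_r$ and $1/N$-order $\sum_r r\alpha_r$, so the leading configurations solve the integer programme ``minimise $\sum_r r\alpha_r$ subject to $\sum_r (r+1)\alpha_r \geq k$''. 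For $k = 2k'$ the unique minimiser is $\alpha_1 = k'$, which, using $\Delta^k (l(l-1))^{k'}|_{l=0} = k!$, contributes $k!/(2^{k'}k'!\,N^{3k'}) = (2k'-1)!!\,N^{-3k'}$. For $k = 2k'+1$ two minimisers tie at cost $k'+1$: $\alpha_1 = k'+1$ (requiring the Stirling identity $\Delta^k l^{k+1}|_{l=0} = k!\binom{k+1}{2}$) and $(\alpha_1,\alpha_2) = (k'-1, 1)$; their contributions sum to $(4k'/3)(2k'+1)!!\,N^{-3k'-2}$. The uniform relative error $O(k^3/N)$ then follows from a routine bound on the tail of the exponential expansion.

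The main obstacle is positivity: unlike the asymptotic, which is essentially bookkeeping once the integer programme governing the leading orders is identified, establishing $a_k(N) > 0$ uniformly in $N \geq k$ from the signed representation \eqref{eq:a_k} forces one to uncover the cumulant-theoretic positivity behind the formula; the tie between the two leading configurations in the odd-$k$ case is also a place where careful accounting is essential.
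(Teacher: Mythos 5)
Your derivation of the closed formula \eqref{eq:a_k} and of $a_0=1$, $a_1=0$ matches the paper's (expand $\prod_i(g_{ii}-1/N)$ and integrate termwise using $\int \prod_{i\in S}g_{ii}\,\d g = 1/(N)_{|S|}$), and your identification of the leading configurations in the asymptotic expansion is correct: I checked that the even-case minimiser $\alpha_1=k'$ yields $(2k'-1)!!N^{-3k'}$ and that the two tied odd-case minimisers contribute $k'(2k'+1)!!$ and $\tfrac{k'}{3}(2k'+1)!!$ times $N^{-3k'-2}$, summing to $\tilde a_k(N)$. But the two hard assertions of the proposition are not actually proved. First, your positivity argument reduces $a_k(N)>0$ to the claim that every joint cumulant $c_m(N)=\kappa_m(g_{11},\dots,g_{mm})$ is positive for $m\ge 2$ and all $N$. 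That reduction is valid (cumulants of order $\ge 2$ are translation-invariant and $\kappa_1(Y_i)=0$), and the claim checks out for $m=2,3,4$, but it is strictly stronger than what you are trying to prove, and your plan for it --- ``reorganise the M\"obius sum into $R_m/Q_m$ with $R_m$ having nonnegative coefficients and induct'' --- is not an argument: no recurrence linking $c_{m+1}$ to $c_m$ is exhibited, and it is not evident why the numerator should have nonnegative coefficients in general. As it stands, positivity is asserted, not established.

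Second, the uniform relative error $O(k^3/N)$ for $N\gg k^3$ is not ``a routine bound on the tail of the exponential expansion.'' After writing $a_k(N)N^k=\Delta^k\exp\bigl(\sum_r S_r(l)/(rN^r)\bigr)\big|_{l=0}$, one must sum over all multi-indices $(\alpha_r)$ of cost exceeding the minimum, and the only generic bound $|\Delta^k P(0)|\le 2^k\max_{0\le l\le k}|P(l)|$ injects factors of $2^k$ and of $k^{r+1}$ per factor $S_r$ that must be beaten \emph{relative to} the main term $(2k'-1)!!N^{-3k'}$, uniformly in $k$. This is precisely where the paper spends most of its effort. The paper's proof takes a different and more economical route: from $\sum_j [g_{kj}]=0$ and $g_{k-1,k-1}g_{k,k-1}=0$ it derives the three-term recursion
\begin{equation*}
a_k=\frac{2(k-1)}{N(N-k+1)}\,a_{k-1}+\frac{k-1}{N^2(N-k+1)}\,a_{k-2},
\end{equation*}
whose coefficients are positive, so positivity is immediate by induction, and the uniform bound follows by propagating $\varepsilon_k=(a_k-\tilde a_k)/\tilde a_k$ through the recursion with explicit constants. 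I would recommend either adopting such a recursion (your cumulant and finite-difference observations are interesting but each leaves its key estimate unproved) or supplying genuine proofs of the cumulant positivity and of the tail bound.
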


\begin{proof}
(\ref{eq:a_k}) can be directly obtained from its definition (\ref{eq:Def_a_k}) by applying (\ref{eq:ExpRandPerm}),
\begin{equation}\label{eq:alg-def-akn}
    \begin{split}
a_k(N) 
&= \int_{g \in S_N} [g_{1 1}]\cdots [g_{k k}] \d g\\
&=\sum_{M \subseteq [k]} \left(- \frac{1}{N}\right)^{k-|M|} 
\int_{g \in S_{N}} \left( \prod_{m \in M} g_{m m}\right) \d g\\
&=\sum_{l=0}^k \tbinom{k}{l} (-N)^{-(k-l)} \tfrac{1}{(N)_l}.
\end{split}
\end{equation}

It remains to prove the asymptotic behavior of $a_k(N)$ as $N \to \infty$ and the uniform bound.
We first get a recurrence formula of $a_k :=a_k (N)$. 
For $k \geq 2$, $\sum_{j=1}^N [g_{k\ j}] = 0$ yields
\[
\begin{split}
&0=\sum_{j=1}^N \int_{g \in S_N} [g_{1 \ 1}]\cdots [g_{k-1 \ k-1}][g_{k \ j}] \d g \\
&=(k-1)\int_{g \in S_N} [g_{1\ 1}]\cdots [g_{k-1 \ k-1}][g_{k \ k-1}] \d g\\
&\qquad 
+ (N-k+1) \int_{g \in S_N} [g_{1\ 1}]\cdots [g_{k-1 \ k-1}] [g_{k \ k}] \d g.
\end{split}
\]
Here, we note that $\int_{g \in S_N} [g_{1\ 1}]\cdots [g_{k-1 \ k-1}] [g_{k \ k}] \d g=a_k$, and $g_{k-1 \ k-1} g_{k \ k-1} =0$, which yields
\[
\begin{split}
[g_{k-1 \ k-1}][g_{k \ k-1}]
&= (g_{k-1 \ k-1}-\tfrac{1}{N})(g_{k \ k-1}-\tfrac{1}{N})\\
&=-\tfrac{1}{N}[g_{k-1 \ k-1}]-\tfrac{1}{N}[g_{k \ k-1}]-\tfrac{1}{N^2}.
\end{split}
\]
Thus, 
\[
\begin{split}
0
&=(k-1)\int_{g \in S_N} [g_{1\ 1}]\cdots [g_{k-2 \ k-2}](-\tfrac{1}{N}[g_{k-1 \ k-1}]-\tfrac{1}{N}[g_{k \ k-1}]-\tfrac{1}{N^2}) \d g \\
& \qquad +(N-k+1)a_k\\
&=(k-1)(-\frac{2}{N} a_{k-1} -\frac{1}{N^2}a_{k-2}) +(N-k+1)a_k.
\end{split}
\] 
Thus, we can obtain a recursion
\begin{equation}\label{eq:recursion_a_k}
a_k - \frac{2(k-1)}{N(N-k+1)} a_{k-1} - \frac{k-1}{N^2 (N-k+1)} a_{k-2} =0.
\end{equation}
We note that this recursive equation ensures by induction that $a_k(N)$ is always nonnegative.

Now, we define $\varepsilon_k := \frac{a_k- \tilde{a_k}}{\tilde{a_k}}$ (that is, $a_k = \tilde{a_k} (1+ \varepsilon_k)$).
It suffices to show that for all $k$,
$
\varepsilon_k = O(\frac{k^3}{N}) .
$
We will compute positive constants $C_1 > 1, C_2 > 1$ so that
for any positive integers $k$ and $N$, if $N>C_1 k^3$, then 
\[
|\varepsilon_k|<\frac{C_2 k^3}{N}.
\]

(i)
For $k=2l$, substituting $a_k = \tilde{a_k} (1+ \varepsilon_k)$ into the recurrence, we get
\[
\varepsilon_{2l}
=\frac{4(2l-1)(2l-2)}{3(N-2l+1)} \varepsilon_{2l-1} + \frac{N}{N-2l+1}\varepsilon_{2l-2} + \frac{(8l-5)(2l-1)}{3(N-2l+1)},
\]
so if $k$ is even, 
\[
|\varepsilon_k|< \frac{4k^2}{3(N-k+1)}|\varepsilon_{k-1}|+\frac{N}{N-k+1}|\varepsilon_{k-2}|+\frac{4k^2}{3(N-k+1)}.
\]
(ii)
For $k=2l+1$, in the same way as in (i),
\[
\varepsilon_{2l+1}=\frac{3N}{(N-2l)(2l+1)} \varepsilon_{2l} + \frac{N(2l-2)}{(N-2l)(2l+1)}\varepsilon_{2l-1} +\frac{2l}{N-2l},
\]
so if $k$ is odd,
\[
|\varepsilon_k|< \frac{3N}{(N-k+1)k}|\varepsilon_{k-1}|+\frac{N}{N-k+1}|\varepsilon_{k-2}|+\frac{k}{N-k+1}.
\]

Here we note that if $N>C_1 k^3$, then 
\[
\frac{1}{N-k+1} < \frac{1}{N-k} 
=\frac{1}{N} (1+\frac{k}{N-k})
< \frac{1}{N} (1+\frac{1}{C_1 -1} \frac{1}{k^2}),
\]
and 
\[\frac{4}{3 }k^2 < \frac{4N}{3C_1 k}<\frac{3N}{k}.\]

Therefore, in either case that $k$ is even or odd,
\[
|\varepsilon_{k}|
< (1+\frac{1}{C_1 -1} \frac{1}{k^2})(\frac{3}{k}|\varepsilon_{k-1}|+|\varepsilon_{k-2}|+\frac{4k^2}{3N}).
\]
So when $N>C_1 k^3$, if we assume that 
$|\varepsilon_{k-1}|<\frac{C_2 (k-1)^3}{N}, |\varepsilon_{k-2}| <\frac{C_2 (k-2)^3}{N}$
, then
\[
|\varepsilon_{k}|
< (1+\frac{1}{C_1 -1} \frac{1}{k^2})\frac{C_2}{N}(\frac{3(k-1)^3}{k} + (k-2)^3 + \frac{4k^2}{3C_2}).
\]

When we put $C_1 = C_2 =2$, they satisfy, for all $k$,
\[(1+\frac{1}{C_1 -1} \frac{1}{k^2})\frac{C_2}{N}(\frac{3(k-1)^3}{k} + (k-2)^3 + \frac{4k^2}{3C_2}) <\frac{C_2 k^3}{N}.\]
Therefore, by combining with the fact 
$|\varepsilon_1|=0, |\varepsilon_2|=\frac{1}{N-1}<\frac{2\cdot 2^3}{N}$,
one can prove inductively that 
for $C_1 = C_2 =2$ and for any positive integers $k, N$ with $N>C_1 k^3$,  
\[
|\varepsilon_k|<\frac{C_2 k^3}{N}.
\]

\end{proof}

\begin{remark}
Inspecting the proof, one observes that it is not possible to push the argument beyond the case $k^3\le C N$. It would be interesting to check whether this is the optimal scaling. This question does not seem obvious to us and we leave it for future work.     
\end{remark}

\begin{remark}
The quantity $a_k(N)$ has an interpretation in terms of Kummer's confluent hypergeometric function $M(a,b,z)$, we refer to section 13 of \cite{AS_MR208797}, defined as 
$$
M(a, b, z):=1+\frac{a z}{b}+\frac{(a)^2}{(b)^2} \frac{z^2}{2!}+\cdots+\frac{(a)^n}{(b)^n}\frac{z^n}{n!}+\cdots,
$$
where
$$
(a)^0:=1,(a)^n:=a(a+1) \cdots(a+n-1).
$$
By using this notation, we get
\[
a_k(N)=\left(-\frac{1}{N}\right)^k M(-k,-N,-N).
\]
After completing the above proof, we noticed that
the quantity $M(a,b,z)$ satisfies a recursive relation, (13.4.1) of \cite{AS_MR208797}: 
$$(b-a) M(a-1, b, z)+(2 a-b+z) M(a, b, z)-a M(a+1, b, z)=0, $$
which allows us to recover the recursive equation \eqref{eq:recursion_a_k} of $a_k(N)$.
We leave our proof 
for the sake of self-containedness and because of its probabilistic insights and use of symmetries of the symmetric group which do not appear in \cite{AS_MR208797}.
\end{remark}

\subsection{Reformulation of the Weingarten function}

Now we can reformulate $\mathring{\Wg}$ in terms of the sequence $\{a_k (N)\}_k$ defined in Proposition \ref{prop:a_k}.

\begin{theorem}\label{thm:CenteredWg2}
For any $\sigma, \tau \in \mathcal{P}(k)$, 
\begin{equation}\label{eq:CenteredWg_improved}
\begin{split}
&\mathring{\Wg}_k(\sigma, \tau, N)\\
=&\sum_{j=\#(\sigma \wedge \tau)}^{k} 
\left( \sum_{\substack{\pi \leq \sigma \wedge \tau \\ \#\pi=j }}^{} \mu(\pi, \sigma) \mu(\pi, \tau) \right)
\sum_{i=0}^{j-|D(\sigma \vee \tau)|}
\tbinom{j-|D(\sigma \vee \tau)|}{i}
N^{-i}
a_{j-i}.
\end{split}    
\end{equation}
\end{theorem}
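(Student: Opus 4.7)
The plan is to start from the expression \eqref{eq:Def_CenteredWg} in Theorem \ref{thm:CenteredWg1} and reorganize the double summation by grouping contributions according to the value of $j := \#\pi$. A preliminary observation, also needed to make sense of the target formula, is that since $\pi \le \sigma\wedge\tau \le \sigma\vee\tau$, every singleton of $\sigma\vee\tau$ is forced to be a singleton of $\pi$; hence $D(\sigma\vee\tau)\subseteq D(\pi)$, and in particular $j\ge |D(\sigma\vee\tau)|=:d$. After this regrouping the combinatorial factor $\sum_{\pi\le\sigma\wedge\tau,\,\#\pi=j}\mu(\pi,\sigma)\mu(\pi,\tau)$ factors out and matches exactly the outer weight in \eqref{eq:CenteredWg_improved}, so the theorem reduces to the purely numerical identity
\[
\sum_{i=0}^{d}(-1)^i\binom{d}{i}N^{-i}\,\frac{1}{(N)_{j-i}}
\;=\;\sum_{i=0}^{j-d}\binom{j-d}{i}N^{-i}\,a_{j-i}(N),
\qquad 0\le d\le j.
\]

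The core ingredient for this numerical identity is a binomial inversion of Proposition \ref{prop:a_k}. The explicit formula \eqref{eq:a_k} writes $a_k(N)$ as the binomial transform of the sequence $(1/(N)_l)_l$ with parameter $-1/N$, and its standard inverse yields
\[
\frac{1}{(N)_k}\;=\;\sum_{l=0}^{k}\binom{k}{l}N^{-(k-l)}\,a_l(N).
\]
I would establish this inversion as a separate short lemma; it is the conceptual heart of the theorem, since it is precisely what converts the Pochhammer denominators appearing in \eqref{eq:Def_CenteredWg} into the unsigned building blocks $a_l(N)$.

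Substituting this inversion for $1/(N)_{j-i}$ into the left-hand side of the displayed identity, one observes that the powers of $N$ collapse via $N^{-i}\cdot N^{-(j-i-l)}=N^{-(j-l)}$. Setting $m:=j-l$ and swapping the order of summation gives
\[
\sum_{m=0}^{j} N^{-m}\,a_{j-m}(N)\,\sum_{i=0}^{\min(d,m)}(-1)^{i}\binom{d}{i}\binom{j-i}{m-i}.
\]
The inner sum is the classical Chu--Vandermonde identity $\sum_{i}(-1)^{i}\binom{d}{i}\binom{j-i}{m-i}=\binom{j-d}{m}$, and since $\binom{j-d}{m}=0$ for $m>j-d$, only the indices $m\in\{0,\ldots,j-d\}$ contribute; renaming $m\to i$ recovers the right-hand side of the identity.

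The main obstacle is not conceptual but organizational: keeping track of the triple summation and correctly handling the nested constraints $i\le d$, $l\le j-i$, and $j\ge d$. The two algebraic inputs (binomial inversion of \eqref{eq:a_k} and Chu--Vandermonde) are entirely classical; the novelty is in recognizing that the explicit shape of $a_k(N)$ given by Proposition \ref{prop:a_k} is exactly the right sequence in which to re-expand $\mathring{\Wg}_k$ so as to eliminate the sign cancellations present in \eqref{eq:Def_CenteredWg}.
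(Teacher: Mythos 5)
Your proposal is correct and follows essentially the same route as the paper: the same regrouping by $j=\#\pi$ reduces the theorem to the same scalar identity \eqref{3.9}, and your ``Chu--Vandermonde'' evaluation of the inner sum is exactly the paper's Lemma \ref{lemma:CombiIncExc} (proved there by the same generating-function argument). The only cosmetic difference is the direction of expansion --- the paper expands each $a_{j-i}$ via \eqref{eq:a_k} and collects the Pochhammer terms, whereas you first binomially invert \eqref{eq:a_k} to write $1/(N)_{j-i}$ in terms of the $a_l$'s; these are the same convolution read in opposite directions.
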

Before we prove this theorem, let us mention a nontrivial corollary that cannot be obtained from the initial Weingarten formula of Theorem \ref{thm:CenteredWg1}:

\begin{corollary}
    For any $N,\sigma,\tau$, the sign of $\mathring{\Wg}_k(\sigma, \tau, N)$ is $(-1)^{\# \sigma + \# \tau}$.
\end{corollary}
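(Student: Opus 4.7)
The plan is to obtain the sign statement as an immediate consequence of Theorem \ref{thm:CenteredWg2}, interpreting the claim as $(-1)^{\# \sigma + \# \tau} \mathring{\Wg}_k(\sigma, \tau, N) \geq 0$. The key observation is that formula \eqref{eq:CenteredWg_improved} presents $\mathring{\Wg}_k$ as a double sum whose two factors decouple completely with respect to sign: the bracketed Möbius sum carries all the signs, while the remaining $a$-type factor is manifestly nonnegative. So the corollary should follow from two independent sign/positivity counts.

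First I would analyze the Möbius factors. From the product formula $\mu(\pi, \sigma) = \prod_{i=1}^{\#\sigma} (-1)^{\lambda_i - 1}(\lambda_i - 1)!$, combined with $\sum_i(\lambda_i - 1) = \#\pi - \#\sigma$, one sees that $\mu(\pi, \sigma)$ has sign $(-1)^{\#\pi - \#\sigma}$ and strictly positive absolute value. Consequently, for every $\pi \leq \sigma \wedge \tau$,
$$\mu(\pi, \sigma)\,\mu(\pi, \tau)\ \text{has sign}\ (-1)^{2\#\pi - \#\sigma - \#\tau} = (-1)^{\#\sigma + \#\tau},$$
a sign independent of $\pi$. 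It follows that each bracketed inner sum $\sum_{\pi \leq \sigma \wedge \tau,\ \#\pi = j} \mu(\pi, \sigma)\mu(\pi, \tau)$ is either zero or carries this common sign.

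Second, I would invoke Proposition \ref{prop:a_k}, which guarantees $a_m(N) \geq 0$ for every $m \geq 0$. Since the binomial coefficients and the powers $N^{-i}$ are positive, the companion factor
$$\sum_{i=0}^{j - |D(\sigma \vee \tau)|} \binom{j - |D(\sigma \vee \tau)|}{i} N^{-i} a_{j-i}(N)$$
is a nonnegative linear combination of nonnegative reals. Plugging both facts into \eqref{eq:CenteredWg_improved} then expresses $(-1)^{\#\sigma + \#\tau} \mathring{\Wg}_k(\sigma, \tau, N)$ as a sum of products of pairs of nonnegative numbers, which proves the claim.

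I do not anticipate any real obstacle: once Theorem \ref{thm:CenteredWg2} and the positivity of $a_k(N)$ are in hand, this is pure bookkeeping, and the entire point of the reformulation \eqref{eq:CenteredWg_improved} relative to the signed formula \eqref{eq:Def_CenteredWg} is precisely to make this decoupling of signs visible. The only mild subtlety is that in degenerate cases (e.g.\ when $a_1(N) = 0$ or when a bracketed Möbius sum happens to cancel) $\mathring{\Wg}_k(\sigma, \tau, N)$ may itself vanish; this is consistent with the sign statement interpreted in the nonstrict sense.
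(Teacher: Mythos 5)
Your proposal is correct and follows exactly the paper's route: the paper's one-line proof likewise reads the sign off formula \eqref{eq:CenteredWg_improved}, using that $\mu(\pi,\sigma)\mu(\pi,\tau)$ has sign $(-1)^{\#\sigma+\#\tau}$ independently of $\pi\leq\sigma\wedge\tau$, with the nonnegativity of the $a_{j-i}(N)$ factors (Proposition \ref{prop:a_k}) supplying the rest. Your write-up merely makes explicit the bookkeeping the paper leaves implicit.
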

This follows from the fact that the sign of $\mu(\pi, \sigma) \mu(\pi, \tau)$ equals $(-1)^{\# \sigma + \# \tau}$, which is independent of $\pi \leq \sigma \wedge \tau$.
Let us now prove Theorem \ref{thm:CenteredWg2}

\begin{proof}
By comparing with the formula (\ref{eq:Def_CenteredWg})
with 
$p := \# \pi = k- |\pi| , d := |D(\sigma \vee \tau)|$, 
it suffices to show that for any nonnegative integers $d,p \in \mathbb{Z}_{\geq 0}$ with $0 \leq d\leq p \leq k$,
\begin{equation}\label{3.9}
\begin{split}
\sum_{i_1 = 0}^{p-d} \binom{p-d}{i_1} N^{-i_1} a_{p-i_1} =
\sum_{i_2 =0}^{d} \binom{d}{i_2} \frac{(N-p+i_2)!}{N!} (-N)^{-i_2}.
\end{split}
\end{equation}

This is shown by calculation using a formula for binomial coefficients, stated later in the Lemma \ref{lemma:CombiIncExc}.
The Left-Hand side of equation \eqref{3.9} is
\[
\begin{split}
&\sum_{i_1=0}^{p-d}\binom{p-d}{i_1} N^{-i_1} \sum_{l=0}^{p-i_1}\binom{p-i_1}{l} \frac{(N-l)!}{N!}\left(-N^{-1}\right)^{p-i_1-l} \\
& =\sum_{i_1=0}^{p-d}\binom{p-d}{i_1} \sum_{i_2=i_1}^p\binom{p-i_1}{p-i_2} \frac{\left(N-p+i_2\right)!}{N!}(-1)^{i_2-i_1} N^{-i_2} \\
& \qquad (\mbox{change of variables, } i_2  =p-l)\\
& =\sum_{i_2=0}^p \frac{\left(N-p+i_2\right)!}{N!}(-N)^{-i_2} \sum_{i_1=0}^{\min \left\{p-d, i_2\right\}}(-1)^{i_1}\binom{p-d}{i_1}\binom{p-i_1}{p-i_2} \\
& =\sum_{i_2=0}^p \frac{\left(N-p+i_2\right)!}{N!}(-N)^{-i_2}\binom{d}{d-i_2} \chi_{d \geq i_2},
\end{split}
\]
where $\chi$ denotes an indicator function.
This is indeed the Right Hand Side equation \eqref{3.9}, as desired.
Note that in the last equality, we used Lemma \ref{lemma:CombiIncExc}, which we prove below. 

\end{proof}

\begin{lemma}\label{lemma:CombiIncExc}
    For any nonnegative integers $k,l,m$ such that $0 \leq l \leq k, \ 0 \leq m \leq k$,
    \begin{equation}\label{eq:BinomFormula}
    \sum_{j=0}^{\min\{k-l, m\} }(-1)^j \binom{k-l}{j}\binom{k-j}{k-m}
    = \binom{l}{l-m} \chi_{l \geq m} .
    \end{equation}
\end{lemma}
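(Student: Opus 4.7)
The plan is to prove the identity by the generating function method, which handles the cancellations arising from the alternating signs in a uniform way. The key observation is that the binomial coefficient $\binom{k-j}{k-m} = \binom{k-j}{m-j}$ is the coefficient of $x^{m-j}$ in $(1+x)^{k-j}$, and, moreover, it vanishes automatically when $j > m$ because then $m-j < 0$. Since $\binom{k-l}{j}$ vanishes for $j > k-l$, the truncation $j \le \min\{k-l,m\}$ in the statement can be harmlessly extended to $j \le k-l$ (or even to $+\infty$): the added terms are all zero.

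Next, I would write
\[
S := \sum_{j=0}^{k-l}(-1)^j \binom{k-l}{j}\binom{k-j}{m-j}
  = [x^{m}] \sum_{j=0}^{k-l} \binom{k-l}{j}(-x)^{j}(1+x)^{k-j},
\]
after pulling the coefficient extraction $[x^m]$ outside the finite sum (which is legitimate because each term is a polynomial in $x$). Factoring out $(1+x)^{k}$ and applying the binomial theorem in $j$, the sum collapses:
\[
S = [x^{m}]\,(1+x)^{k}\sum_{j=0}^{k-l}\binom{k-l}{j}\!\left(\frac{-x}{1+x}\right)^{j}
  = [x^{m}]\,(1+x)^{k}\!\left(\frac{1}{1+x}\right)^{\!k-l}
  = [x^{m}]\,(1+x)^{l}.
\]
Therefore $S = \binom{l}{m}$, and since $\binom{l}{m}$ equals $\binom{l}{l-m}$ when $l \ge m$ and vanishes when $l < m$, this is exactly $\binom{l}{l-m}\chi_{l\ge m}$, as claimed.

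There is essentially no genuine obstacle: the only subtle step is verifying that the extension of the summation range introduces no spurious nonzero terms, which is immediate from the vanishing convention $\binom{a}{b}=0$ whenever $b<0$ or $b>a$. Everything else reduces to the binomial theorem. As a sanity check, one may verify the identity in the degenerate cases $l=0$ (where both sides equal $\delta_{m,0}$) and $l=k$ (where the only surviving term on the left, $j=0$, gives $\binom{k}{k-m} = \binom{k}{m} = \binom{l}{l-m}$), which match. An alternative route via the Vandermonde--Chu convolution would also be viable but would require splitting the sum into several cases, whereas the generating function approach treats the indicator $\chi_{l \ge m}$ uniformly.
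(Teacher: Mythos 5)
Your proof is correct and uses essentially the same generating-function argument as the paper: both extract the coefficient of $x^m$ from $\sum_j \binom{k-l}{j}(-1)^j x^j(1+x)^{k-j}=(1+x)^l$. Your additional care about extending the summation range past $\min\{k-l,m\}$ is a welcome clarification but does not change the substance.
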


\begin{proof}
This can be proved by using a generating function. 
Looking at the coefficients of $x^m$ in 
\[
\begin{split}
&\sum_{j=0}^{k-l} \binom{k-l}{j} (-1)^j x^j (1+x)^{k-j}\\
&(=(1+x)^k \sum_{j=0}^{k-l} \binom{k-l}{j} \left(-\frac{x}{1+x}\right)^j
=(1+x)^k \left(1-\frac{x}{1+x}\right)^{k-l})\\
&=(1+x)^l
\end{split}
\]
enables us to get the desired equation.
\end{proof}

\begin{remark}
As in the proof proof of Lemma \ref{lemma:CombiIncExc}, there is also a combinatorial interpretation.
The left-hand side of (\ref{eq:BinomFormula}) can be interpreted as an inclusion-exclusion principle applied to subsets. 
Suppose that we partition a set of size $k$ into two subsets: $S$ (size $k-l$) and $T$ (size $l$). The sum counts the number of $m$-element subsets of $T$, adjusted by inclusion-exclusion to exclude overlaps with $S$.

\end{remark}

\section{Asymptotic behavior of Weingarten Functions}\label{sec:Asymptotics}

Let us use the formula of Theorem \ref{thm:CenteredWg2} for $\mathring{\Wg}_k(\sigma, \tau, N)$ in order to study the leading term $\widetilde{\mathring{\Wg}}_k(\sigma, \tau, N):=bN^{-a}$ of $\mathring{\Wg}_k(\sigma, \tau, N)$ as $N \to \infty $, i.e. the function which satisfies
\begin{equation}\label{eq:asym_CenteredWg}
\mathring{\Wg}_k(\sigma, \tau, N)
=\widetilde{\mathring{\Wg}}_k (\sigma, \tau, N) (1+O(N^{-1})). 
\end{equation}

\begin{proposition}\label{prop:asym_CenteredWg}
The leading term $\widetilde{\mathring{\Wg}}_k(\sigma, \tau, N)$ 
defined in (\ref{eq:asym_CenteredWg}) satisfies:
\begin{enumerate}
    \item if $d:=|D(\sigma \vee \tau)|$ is equal to $k$, which means that $\sigma=\tau=1_k$, 
\[
\widetilde{\mathring{\Wg}}_k(\sigma, \tau, N)= \tilde{a}_k (N),
\]
    \item if $d$ is even and $d \neq k$, 
\[
\widetilde{\mathring{\Wg}}_k(\sigma, \tau, N)=
\mu(\sigma \wedge \tau, \sigma)\mu(\sigma \wedge \tau, \tau) \{(d-1)!!\} N^{-(\# (\sigma \wedge \tau)+\tfrac{d}{2} )},
\]
    \item and if $d$ is odd and $d \neq k$,
\[
\widetilde{\mathring{\Wg}}_k(\sigma, \tau, N)=
\mu(\sigma \wedge \tau, \sigma)\mu(\sigma \wedge \tau, \tau) \left(\# (\sigma \wedge \tau)-\tfrac{d}{3}-\tfrac{2}{3}\right)(d!!) N^{-(\# (\sigma \wedge \tau)+\tfrac{d+1}{2})}.
\]
\end{enumerate}

\end{proposition}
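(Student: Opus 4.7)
The starting point is the identity from Theorem~\ref{thm:CenteredWg2}, into which I substitute the asymptotic $a_k(N) = \tilde{a}_k(N)(1+O(k^3/N))$ of Proposition~\ref{prop:a_k}. Writing $\ell := \#(\sigma \wedge \tau)$ and $d := |D(\sigma \vee \tau)|$, the strategy is to count the order in $N$ of each summand $\binom{j-d}{i} N^{-i}\tilde{a}_{j-i}(N)$ indexed by $(j,i)$, identify the pairs that tie for the largest exponent, and sum their contributions with the appropriate M\"obius weight.

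A direct parity count from the formula for $\tilde{a}$ shows that $N^{-i}\tilde{a}_{j-i}(N)$ is of order $N^{-(3j-i)/2}$ if $j-i$ is even, of order $N^{-(3j-i+1)/2}$ if $j-i$ is odd and at least $3$, and vanishes if $j-i=1$ (since $a_1\equiv 0$). In both non-vanishing regimes the exponent is strictly increasing in $i$ and strictly decreasing in $j$, so the leading contribution comes from $j = \ell$ with the largest admissible $i$; at $j = \ell$ the inner M\"obius sum collapses to the single partition $\pi = \sigma \wedge \tau$, producing the prefactor $\mu(\sigma \wedge \tau,\sigma)\mu(\sigma \wedge \tau,\tau)$ that appears in the statement.

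What is left is a parity split on $d$. For $d$ even with $d \neq k$, the unique maximizer is $i = \ell - d$, producing the single contribution $(d-1)!!\,N^{-\ell - d/2}$, i.e.\ case~(2). For $d$ odd with $d \geq 3$, two distinct indices tie at the exponent $-\ell - (d+1)/2$, namely $i = \ell - d$ (which brings in $\tilde{a}_d$) and $i = \ell - d - 1$ (which brings in $\tilde{a}_{d+1}$); summing these via the closed forms of $\tilde{a}_d$ and $\tilde{a}_{d+1}$ and simplifying yields the coefficient $d!!\,(\ell - d/3 - 2/3)$ of case~(3). The borderline case $d = 1$ is absorbed into the same formula: here $i = \ell - 1$ is killed by $a_1 = 0$, leaving only $i = \ell - 2$ with contribution $(\ell - 1)N^{-\ell - 1}$, which is the $d=1$ specialization of case~(3). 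Case~(1), $d = k$ (which forces $\sigma \vee \tau = 0_k$ and hence $\sigma = \tau = 0_k$), reduces by definition to $\mathring{\Wg}_k(\sigma,\tau,N) = a_k(N) \sim \tilde{a}_k(N)$.

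The main obstacle is spotting the double tie in the odd-$d$ case: the parity flip from $d$ to $d+1$ shifts the exponent of $\tilde{a}$ by exactly $3/2$, which is compensated by the unit shift in $i$, so the two summands live at the same order and it is their sum that produces the characteristic $-d/3 - 2/3$ shift. Overlooking either term would give the wrong coefficient. A small well-definedness check ($i = \ell - d - 1 \geq 0$ in the odd case) reduces to the observation that $\ell > d$ outside the diagonal case $\sigma = \tau = 0_k$, which itself belongs to case~(1).
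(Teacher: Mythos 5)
Your proof is correct and follows essentially the same route as the paper's: substitute the asymptotics of $a_m(N)$ from Proposition \ref{prop:a_k} into the formula of Theorem \ref{thm:CenteredWg2}, observe that the leading contribution sits at $j=\#(\sigma\wedge\tau)$ (where the M\"obius sum collapses to $\pi=\sigma\wedge\tau$), and note the single maximizer in $i$ when $d$ is even versus the two-term tie ($i=\ell-d$ and $i=\ell-d-1$) when $d$ is odd, whose sum produces the coefficient $\ell-\tfrac{d}{3}-\tfrac{2}{3}$. Your treatment is in fact slightly more careful than the paper's (the explicit exponent bookkeeping, the $d=1$ borderline via $a_1=0$, the check that $\ell>d$ off the diagonal case), and your reading of case (1) as $\sigma=\tau=0_k$ is the correct one --- the ``$1_k$'' in the statement is a typo, since $|D(\sigma\vee\tau)|=k$ forces $\sigma\vee\tau=0_k$.
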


\begin{proof}
This is shown directly by combining Theorem \ref{thm:CenteredWg2} with the asymptotics of $a_k(N)$ (Proposition \ref{prop:a_k}).

When $d$ is even or $\#(\sigma \wedge \tau) - |D(\sigma \vee \tau)|=0$ (i.e. \  $d=k$), in the summand of (\ref{eq:CenteredWg_improved}), only one term 
\[\mu(\sigma \wedge \tau, \sigma)\mu(\sigma \wedge \tau, \tau) N^{-(\#(\sigma \wedge \tau)-d)} a_{d}\]
is the leading term with the leading order $N^{-(\# (\sigma \wedge \tau)+\tfrac{d}{2} )}$. 

When $d$ is odd and not equal to $k$, two terms 
\[
\begin{split}
&\mu(\sigma \wedge \tau, \sigma)\mu(\sigma \wedge \tau, \tau) N^{-(\#(\sigma \wedge \tau)-d)} a_{d}, \\
&\quad \mbox{ and }\  
\mu(\sigma \wedge \tau, \sigma)\mu(\sigma \wedge \tau, \tau) (\#(\sigma \wedge \tau)-d)N^{-(\#(\sigma \wedge \tau)-d-1)} a_{d+1}
\end{split}
\]
have the leading order $N^{-(\# (\sigma \wedge \tau)+\tfrac{d+1}{2})}$.
\end{proof}

The above proposition clearly shows the difference in the decay speed as $N \to \infty$ of a noncentered Weingarten coefficient and the corresponding centered Weingarten coefficient. 
Compared to the fact that the asymptotics of $\Wg_k(\sigma, \tau, N)$ is $O(N^{-\# (\sigma \wedge \tau)})$ as we have seen in Corollary \ref{cor:Asym_WgPerm}, the leading order of $\mathring{\Wg}_k(\sigma, \tau, N)$ is $O(N^{-(\# (\sigma \wedge \tau)+\lceil\tfrac{d}{2} \rceil)} )$, where $d:=|D(\sigma \vee \tau)|$.

In particular, we get the following Corollary that explains very well the role of singletons, as singled out in 
\cite{B20_MR4203039},\cite{BC19_MR4024563}:

\begin{corollary}
The following estimates hold:

     $$\mathring{\Wg}_k(\sigma, \tau, N)=\Wg_k(\sigma, \tau, N)O(N^{-\lceil\tfrac{|D(\sigma \vee \tau)|}{2} \rceil})$$
and
$$\Wg_k(\sigma, \tau, N)=
\mathring{\Wg}_k(\sigma, \tau, N)O(N^{\lceil\tfrac{|D(\sigma \vee \tau)|}{2} \rceil})$$
     
\end{corollary}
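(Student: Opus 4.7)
The plan is to read off both estimates directly from the two asymptotic expansions already established: Corollary \ref{cor:Asym_WgPerm} for $\Wg_k(\sigma,\tau,N)$ and Proposition \ref{prop:asym_CenteredWg} for $\mathring{\Wg}_k(\sigma,\tau,N)$. Since both have the form (explicit nonzero constant)$\cdot N^{-\alpha}(1+O(N^{-1}))$, the ratio of the two is of order $N^{\alpha_1-\alpha_2}$ up to a nonzero multiplicative constant, and the two estimates of the corollary correspond to the two directions of this comparison.

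First, I would recall from Corollary \ref{cor:Asym_WgPerm} that
\[
\Wg_k(\sigma,\tau,N)=\mu(\sigma\wedge\tau,\sigma)\mu(\sigma\wedge\tau,\tau)\,N^{-\#(\sigma\wedge\tau)}(1+O(N^{-1})),
\]
whose leading coefficient $\mu(\sigma\wedge\tau,\sigma)\mu(\sigma\wedge\tau,\tau)$ is a product of nonzero integers and hence never vanishes. Next I would unify the three cases of Proposition \ref{prop:asym_CenteredWg} by setting $d:=|D(\sigma\vee\tau)|$ and observing that in every case the leading order equals $N^{-(\#(\sigma\wedge\tau)+\lceil d/2\rceil)}$. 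Indeed, for $d$ even or $d=k$ the exponent is $\#(\sigma\wedge\tau)+d/2=\#(\sigma\wedge\tau)+\lceil d/2\rceil$, and for $d$ odd the exponent is $\#(\sigma\wedge\tau)+(d+1)/2=\#(\sigma\wedge\tau)+\lceil d/2\rceil$. Dividing the two asymptotics yields
\[
\frac{\mathring{\Wg}_k(\sigma,\tau,N)}{\Wg_k(\sigma,\tau,N)}=O\!\left(N^{-\lceil d/2\rceil}\right),
\]
which is precisely the first estimate.

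For the second estimate, the only subtle step is to verify that the leading coefficient of $\mathring{\Wg}_k(\sigma,\tau,N)$ is nonzero in each of the three cases of Proposition \ref{prop:asym_CenteredWg}, so that we can invert the ratio without loss of order. In Case (1) the coefficient is $\tilde{a}_k(N)$'s positive numerical constant; in Case (2) it is $\mu(\sigma\wedge\tau,\sigma)\mu(\sigma\wedge\tau,\tau)(d-1)!!$, plainly nonzero. Case (3), with the extra factor $\#(\sigma\wedge\tau)-\tfrac{d}{3}-\tfrac{2}{3}$, is the one requiring justification. I would handle it by noting the elementary containment $D(\sigma\vee\tau)\subseteq D(\sigma\wedge\tau)$ (a singleton in the join is a singleton in both $\sigma$ and $\tau$, hence in the meet), which gives $\#(\sigma\wedge\tau)\ge d$. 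Hence
\[
\#(\sigma\wedge\tau)-\tfrac{d}{3}-\tfrac{2}{3}\ge d-\tfrac{d}{3}-\tfrac{2}{3}=\tfrac{2(d-1)}{3}\ge 0,
\]
with equality only if $d=1$ and $\#(\sigma\wedge\tau)=1$; but $\#(\sigma\wedge\tau)=1$ forces $\sigma=\tau=1_k$, which gives $d=0$ for $k\ge 2$, contradicting $d=1$ (the case $k=1$ falls under (1)). Thus the factor is strictly positive, and the leading coefficient of $\mathring{\Wg}_k$ is nonzero throughout Case (3).

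The main obstacle is this last nonvanishing check in the odd-$d$ case; once past it, the second estimate follows by inverting the ratio, yielding
\[
\frac{\Wg_k(\sigma,\tau,N)}{\mathring{\Wg}_k(\sigma,\tau,N)}=O\!\left(N^{\lceil d/2\rceil}\right),
\]
which completes the proof. Everything else is a transparent comparison of two already-proved asymptotic formulas.
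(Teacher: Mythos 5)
Your argument is correct and follows essentially the same route as the paper, which derives the corollary by comparing the asymptotic order $N^{-\#(\sigma\wedge\tau)}$ of $\Wg_k$ from Corollary \ref{cor:Asym_WgPerm} with the order $N^{-(\#(\sigma\wedge\tau)+\lceil d/2\rceil)}$ of $\mathring{\Wg}_k$ from Proposition \ref{prop:asym_CenteredWg}. Your explicit verification that the leading coefficient in the odd-$d$ case, $\#(\sigma\wedge\tau)-\tfrac{d}{3}-\tfrac{2}{3}$, never vanishes (via $D(\sigma\vee\tau)\subseteq D(\sigma\wedge\tau)$) is a worthwhile detail that the paper leaves implicit but that is genuinely needed for the second, inverted estimate.
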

Specifically, the exponent $\lceil\tfrac{|D(\sigma \vee \tau)|}{2} \rceil$ is consistent with the critical decay of the exponent $a_1$ in Proposition 11 of \cite{B20_MR4203039}.

\begin{remark}\label{rem:UniformBoundCenteredWg}
For $\sigma , \tau$ with the assumption that $k-\#(\sigma \wedge \tau)$ is bounded above by some universal constant (for example, in the case that $\sigma \wedge \tau = 0_k$), we can say that
if $N \gg k^3$ and $k \geq 1$, then
\begin{equation}\label{eq:UniformBoundCenteredWg}
\mathring{\Wg}_k(\sigma, \tau, N)
= \widetilde{\mathring{\Wg}}_k(\sigma, \tau, N) (1+ O(k^3/N)).
\end{equation}
This prompts two questions:

(1) Can we always reduce to the case where $k-\#(\sigma \wedge \tau)$ is bounded above?
The answer turns out to be yes, as we will see in the subsequent section \ref{sec:reduction}, where we exhibit a reduction of 
$\int_{g \in S_{N}} [g_{i_{1} j_{1}}] \cdots [g_{i_{k} j_{k}}] \d g $ 
to the case where $\Pi_{\mathbf{i}} \wedge \Pi_{\mathbf{j}} = 0_k$, 
so that only the coefficients
$\mathring{\Wg}_k(\sigma, \tau, N)$ with $\sigma \wedge \tau = 0_k$ contribute to the sum.

(2) What happens without an above bound for $k-\#(\sigma \wedge \tau)$?
In section \ref{sec:FailureUniformBound}, we show that  
without the assumption on $\sigma, \tau$ like above, 
uniform bounds can fail to hold.
\end{remark}

\section{Recursive reductions}\label{sec:reduction}
The goal of this section is to show that any integral of the type
$\int_{g \in S_{N}} [g_{i_{1} j_{1}}] \cdots [g_{i_{k} j_{k}}] \d g $.
can be transformed into a linear combination of integrals satisfying
$\Pi_{\mathbf{i}} \wedge \Pi_{\mathbf{j}} = 0_k$.
We start with the following lemma.

\begin{lemma}
For any permutation matrix $g \in S_N$ and any $1 \leq i,j \leq N$,
\begin{equation}
[g_{i j}]^l = \alpha_l [g_{i j}] + \beta_l \qquad \mbox{for \ }l \geq 1,
\end{equation}
where
\begin{equation}\label{eq:alpha_beta}
\begin{cases}
    \alpha_l = (\frac{N-1}{N})^l - (-\frac{1}{N})^l 
    \\
    \beta_l = \frac{1}{N} (\frac{N-1}{N})^l + \frac{N-1}{N} (-\frac{1}{N})^l 
\end{cases} \qquad \mbox{for \ }l \geq 1.
\end{equation}

\end{lemma}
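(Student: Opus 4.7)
The plan is to exploit the fact that the entries of a permutation matrix are binary: since $g \in S_N$ is a permutation matrix, $g_{ij} \in \{0,1\}$. Consequently $[g_{ij}] = g_{ij} - \tfrac{1}{N}$ takes only two possible values, namely $\tfrac{N-1}{N}$ when $g_{ij}=1$ and $-\tfrac{1}{N}$ when $g_{ij}=0$, and the same is true of every power $[g_{ij}]^l$. Any function of a two-valued quantity can be written as an affine function of it, so the identity $[g_{ij}]^l = \alpha_l[g_{ij}] + \beta_l$ must hold for some pair of constants $\alpha_l,\beta_l$ depending only on $l$ and $N$; these constants are then forced by interpolation.

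Concretely, I would set up the $2 \times 2$ linear system obtained by plugging in the two possible values of $g_{ij}$:
\begin{equation*}
\left(\tfrac{N-1}{N}\right)^l = \alpha_l \cdot \tfrac{N-1}{N} + \beta_l,
\qquad
\left(-\tfrac{1}{N}\right)^l = \alpha_l \cdot \left(-\tfrac{1}{N}\right) + \beta_l.
\end{equation*}
Subtracting the second equation from the first, the coefficient of $\alpha_l$ collapses to $\tfrac{N-1}{N} + \tfrac{1}{N} = 1$, which immediately yields the stated formula for $\alpha_l$. Back-substituting into the second equation gives $\beta_l = \left(-\tfrac{1}{N}\right)^l + \tfrac{1}{N}\alpha_l$, and a short rearrangement (grouping the $\left(\tfrac{N-1}{N}\right)^l$ and $\left(-\tfrac{1}{N}\right)^l$ terms separately) produces exactly the expression \eqref{eq:alpha_beta} for $\beta_l$.

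There is essentially no obstacle here; the only mild nuisance is the bookkeeping for $\beta_l$. For readers who prefer an algebraic derivation, an alternative route is induction on $l$: from the identity $g_{ij}^2 = g_{ij}$, equivalently $([g_{ij}] + \tfrac{1}{N})^2 = [g_{ij}] + \tfrac{1}{N}$, one obtains the relation $[g_{ij}]^2 = (1-\tfrac{2}{N})[g_{ij}] + \tfrac{1}{N}(1-\tfrac{1}{N})$, and multiplying $[g_{ij}]^l = \alpha_l[g_{ij}] + \beta_l$ by $[g_{ij}]$ and reusing this quadratic relation yields two-term recursions for $\alpha_l$ and $\beta_l$ whose closed-form solutions are precisely \eqref{eq:alpha_beta}. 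However, the two-point interpolation argument above is the most economical, so I would present it directly.
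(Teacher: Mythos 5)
Your proof is correct. The two-point interpolation argument is sound: $[g_{ij}]$ takes exactly the two distinct values $\tfrac{N-1}{N}$ and $-\tfrac{1}{N}$, so $[g_{ij}]^l$ is determined by an affine function of $[g_{ij}]$, and the $2\times 2$ system you write down pins $\alpha_l$ and $\beta_l$ uniquely; the algebra checks out, including the sanity check $\alpha_1=1$, $\beta_1=0$. The paper instead disposes of the lemma in one line by induction on $l$ using $g_{ij}^m=g_{ij}$ --- essentially the ``alternative route'' you sketch in your last paragraph, where the quadratic relation $[g_{ij}]^2=(1-\tfrac2N)[g_{ij}]+\tfrac1N(1-\tfrac1N)$ drives a two-term recursion. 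The two approaches are equally short here; yours has the small advantage of producing the closed forms for $\alpha_l,\beta_l$ directly without having to guess them and verify by induction, while the paper's recursion view is the one that generalizes to settings where the entry is not two-valued but still satisfies a low-degree polynomial identity. Either way the lemma is established.
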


\begin{proof}
This can be proved by induction on $l$ using the fact that
$g_{i j}^m = g_{i j}$ for all $m \geq 1$.
\end{proof}

\begin{remark}
    $\alpha_l, \beta_l$ can be uniformly estimated as follows: if $N \gg k$, then for any $k \geq l \geq 2$,
    \begin{equation}
    \alpha_l = 1+ O(k/N), \ 
    \beta_l = \frac{1}{N} (1+ O(k/N)).
    \end{equation}
    We note that $\alpha_1=1, \ \beta_1 = 0$.
\end{remark}

For $\Pi_{\mathbf{i}},\Pi_{\mathbf{j}} \in \mathcal{P}(k)$, when we define
$r:= \# (\Pi_{\mathbf{i}} \wedge \Pi_{\mathbf{j}})
$, we may assume 
$\Pi_{\mathbf{i}} \wedge \Pi_{\mathbf{j}} = \{B_1, \cdots , B_r\}$ where $B_1, \cdots , B_r$ are the blocks of $\Pi_{\mathbf{i}} \wedge \Pi_{\mathbf{j}}$ so that 
$\# B_1 \geq \cdots \geq \# B_r \geq 1$. 
(i.e. \ 
$l_1, l_2 \in B_s \Leftrightarrow i_{l_1}= i_{l_2} \& j_{l_1}=j_{l_2}$
)\\
Then we can naturally define new partitions 
$\Pi_{\mathbf{i}}^\prime := (i_{B_s})_{s=1}^r, \Pi_{\mathbf{j}}^\prime := (j_{B_s})_{s=1}^r \in \mathcal{P}(r)$,
which satisfy 
$\Pi_{\mathbf{i}}^\prime \wedge \Pi_{\mathbf{j}}^\prime = 0_r$
, and 
\[
\begin{split}
\int_{g \in S_{N}} \prod_{l=1}^k [g_{i_l j_l}] \d g 
&= \int_{g \in S_{N}} \prod_{s=1}^r [g_{i_{B_s} j_{B_s}}]^{\# B_s} \d g \\
&= \int_{g \in S_{N}} \prod_{s=1}^r ( \alpha_{\# B_s} [g_{i_{B_s} j_{B_s}}] + \beta_{\# B_s}) \d g \\
&= \sum_{M \subset [r]} (\prod_{s \in M} \alpha_{\# B_s}) (\prod_{s \notin M} \beta_{\# B_s})\int_{g \in S_N} \prod_{s \in M} [g_{i_{B_s} j_{B_s}}] \d g.
\end{split}
\]
Here, we note that
\[\int_{g \in S_N} \prod_{s \in M} [g_{i_{B_s} j_{B_s}}] \d g
=\sum_{\sigma, \tau \in \mathcal{P}(|M|)} \mathring{\Wg}_{|M|} (\sigma, \tau, N) \zeta(\sigma, \Pi^\prime_{\mathbf{i}}|_M) \zeta(\tau, \Pi^\prime_{\mathbf{j}}|_M),\]
and only the Weingarten coefficients $\mathring{\Wg}_{|M|}(\sigma, \tau, N)$ with
$\sigma \wedge \tau =0_{|M|}$ contribute to the sum since
$\Pi^\prime_{\mathbf{i}}|_M \wedge \Pi^\prime_{\mathbf{j}}|_M = 0_{|M|}
\ \ (\forall M \subset [r])$ as mentioned in Remark \ref{rem:UniformBoundCenteredWg}.

\section{Examples of the failure for the uniform bound of the Weingarten coefficients}\label{sec:FailureUniformBound}
As we saw from remark \ref{rem:UniformBoundCenteredWg}, 
for $\sigma , \tau$ under the assumption that $k-\#(\sigma \wedge \tau)$ is bounded by some universal constant in terms of $k$, we can obtain a uniform bound for $\mathring{\Wg}_k (\sigma, \tau, N)$, like (\ref{eq:UniformBoundCenteredWg}). 
However, without such an assumption on $\sigma, \tau$, we cannot obtain the corresponding uniform bounds. 
This is mainly because in formula (\ref{eq:CenteredWg_improved}) as $j$ increases over $\# (\sigma \wedge \tau) \leq j \leq k$, the contribution of \[\sum_{\substack{\pi \leq \sigma \wedge \tau \\ \#\pi=j }}^{} \mu(\pi, \sigma) \mu(\pi, \tau)\] increases exponentially with respect to $k$.
Here, we explain through an example why the uniform bound can not hold without an assumption on the maximum value of
$k-\#(\sigma \wedge \tau)$. For $\sigma =\tau= 1_k$, 
since 
\[\# \{\pi \leq 1_k(=\sigma \wedge\tau) \ | \ \# \pi= 2 (= \# (\sigma \wedge \tau) +1)\}
=2^{k-1} - 1,\]
by Theorem \ref{thm:CenteredWg2},
\[
\begin{split}
\mathring{\Wg}_k (\sigma, \tau, N)
&= \mathring{\Wg}_k (1_k, 1_k, N)\\
&\geq \sum_{j=1(=\#(\sigma \wedge \tau))}^k \left(\sum_{\substack{\pi \leq 1_k(=\sigma \wedge \tau) \\ \#\pi=j }}^{} \mu(\pi, 1_k) \mu(\pi, 1_k)\right) N^{-j} a_0\\
&\geq N^{-1}a_0 + \left(\sum_{\substack{\pi \leq 1_k(=\sigma \wedge \tau) \\ \#\pi=2 }}^{} \mu(\pi, 1_k) \mu(\pi, 1_k)\right)N^{-2}a_0\\
&=\left(N^{-1} + (2^{k-1} - 1)N^{-2}\right)a_0\\
&=\widetilde{\mathring{\Wg}}_k (1_k, 1_k, N)(1+(2^{k-1} - 1)N^{-1}).
\end{split}
\]
Here, we note that the leading term $\widetilde{\mathring{\Wg}}_k (1_k, 1_k, N)=N^{-1}$, and we observe that the coefficients of the lower order terms are exponential (in $k$) greater than that of the leading order term. This explains that it is not possible to get a uniform bound as in (\ref{eq:asym_CenteredWg}) 
in this case.
 
\section{Studying centered moments from centered Weingarten functions: examples}\label{sec:moment}
An initial motivation to study centered Weingarten calculus for random permutations was to study the centered moments of random permutations. While centered Weingarten calculus provides substantial insight, we show through examples that some phenomena occur that show that one can not yet fully capture the decay of some centered moments through Weingarten calculus. 

\begin{example}
    Consider the quantity, for $p \geq 2$
    \[
    \int_{g \in S_N} [g_{1 1}]^{p}[g_{2 2}]^{p}\cdots [g_{k k}]^{p} \d g.
    \]
    With the reduction in Section \ref{sec:reduction},
    \[
    \begin{split}
    &\int_{g \in S_N} [g_{1 1}]^{p}[g_{2 2}]^{p}\cdots [g_{k k}]^{p} \d g\\
    &=\int_{g \in S_N} (\alpha_{p}[g_{1 1}]+\beta_{p})(\alpha_{p}[g_{2 2}]+\beta_{p})\cdots (\alpha_{p}[g_{k k}]+\beta_{p}) \d g \\
    &=\sum_{l=0}^k \binom{k}{l} \alpha_p^l \beta_p^{k-l} \mathring{\Wg}_l (0_l,0_l,N),
    \end{split}
    \]
    where $\alpha_p \mbox{ and } \beta_p$ is defined as \eqref{eq:alpha_beta}.\\
    The leading term for this quantity is the term of $l=0$, \ $\beta_p^k a_0(N)=N^{-k} (1+O(pk/N)).$
    Moreover, by the uniform bound of $\{a_k(N)\}_k$, we can say that for $N \gg pk^3, k\geq 1$ 
    \[ 
    \int_{g \in S_N} [g_{1 1}]^p[g_{2 2}]^p\cdots [g_{k k}]^p \d g 
    =N^{-k} (1+O(\tfrac{pk^3}{N})).
    \]
    In this example, after expanding, given that $p\ge 2$, we see that the leading is the only term that does not involve a $g_{ii}$. In other words, the knowledge of centered Weingarten asymptotics do not appear in the leading order, they just serve to secure a uniform bound. 
\end{example}
\vspace{\baselineskip}
We describe another example.
\begin{example}
    A direct computation shows that the quantity
    \[
    \int_{g \in S_N} [g_{1 1}][g_{1 1}][g_{1 2}][g_{1 2}][g_{1 3}][g_{1 4}] \d g
    \]
    is equal to $\frac{2}{N^5}-\frac{5}{N^6}$.
    On the other hand, with the reduction,
    \begin{equation}\label{eq:loss-precision}
    \begin{split}
    &\int_{g \in S_N} [g_{1 1}][g_{1 1}][g_{1 2}][g_{1 2}][g_{1 3}][g_{1 4}] \d g\\
    &=\int_{g \in S_N} (\alpha_2[g_{1 1}]+\beta_2)(\alpha_2[g_{1 2}]+\beta_2)[g_{1 3}][g_{1 4}] \d g\\
    &=\alpha_2^2 \sum_{\sigma \leq 1_4} \mathring{\Wg}_4(\sigma, 0_4, N)
    +2 \alpha_2 \beta_2 \sum_{\sigma \leq 1_3} \mathring{\Wg}_3(\sigma, 0_3, N)
    +\beta_2^2 \sum_{\sigma \leq 1_2} \mathring{\Wg}_2(\sigma, 0_2, N),
    \end{split}
    \end{equation}
    where $\alpha_2=1-\tfrac{2}{N}
    , \ \beta_2=\tfrac{1}{N}-\tfrac{1}{N^2}
    $.\\
    With the fact that 
    $\mathring{\Wg}_l(\sigma, 0_l, N)=O(N^{-l})$
    as we saw in Section \ref{sec:Asymptotics}, 
    this yields that 
    \[
    \int_{g \in S_N} [g_{1 1}][g_{1 1}][g_{1 2}][g_{1 2}][g_{1 3}][g_{1 4}] \d g = O(N^{-4}).
    \]
    Given that the actually leading term is $2N^{-5}$, this estimate $O(N^{-4})$ carries less information, which means that in fact the terms with the order $N^{-4}$ vanish by the cancellation in the sum in \eqref{eq:loss-precision}.     
\end{example}

\bibliography{ref} 
\bibliographystyle{alpha}

\end{document}